\newtheorem{thm}{Theorem}[section]
\newtheorem{prop}[thm]{Proposition}
\newtheorem{lemma}[thm]{Lemma}
\newtheorem{cor}[thm]{Corollary}
\newtheorem{remark}[thm]{Remark}
\newtheorem{defin}[thm]{Definition}
\def\A{\mathbb{A}}
\def\C{\mathbb{C}}
\def\R{\mathbb{R}}
\def\Z{\mathbb{Z}}
\def\P{\mathbb{P}}
\def\F{\mathbb{F}}
\def\L{\mathbb{L}}
\def\g{\mathfrak{g}}
\def\b{\mathfrak{b}}
\def\R{\mathcal{R}}
\def\W{\mathcal{W}}
\def\a{\alpha}
\def\b{\beta}
\def\g{\gamma}
\def\phi{\varphi}
\def\l{\lambda}
\def\om{\omega}
\def\Oc{{\mathcal O}}
\def\dim{{\rm dim}}
\def\deg{{\rm deg}}
\title{Schubert calculus for algebraic cobordism}
\author{Jens Hornbostel and Valentina Kiritchenko}
\thanks{The second author would like to thank Jacobs University Bremen,
the Hausdorff Center for Mathematics and the Max Planck Institute for
Mathematics in Bonn for hospitality and support. The second author was also partially
supported by the Dynasty Foundation fellowship and RFBR grant 10-01-00540-a.}
\begin{document}

\begin{abstract}

We establish a Schubert calculus for Bott-Samelson resolutions
in the algebraic cobordism ring of a complete flag variety $G/B$ extending
the results of Bressler--Evens \cite{BE} to the algebro-geometric setting .

\end{abstract}

\maketitle

\section{Introduction}

We fix a base field $k$ of characteristic $0$.
Algebraic cobordism $\Omega^*(-)$ has been invented some years
ago by Levine and Morel \cite{LM} as the universal oriented algebraic
cohomology theory on smooth varieties over $k$. In particular, its
coefficient ring $\Omega^*(k)$ is isomorphic to the Lazard ring
$\L$ (introduced in \cite{La}). In a recent article \cite{LP}, Levine and
Pandharipande
show that algebraic cobordism $\Omega^n(X)$
allows a presentation with generators being projective morphisms
$Y \to X$ of relative codimension $n(:= \dim(X)-\dim(Y))$
between smooth varieties
and relations given by a refinement
of the naive algebraic cobordism relation (involving double
point relations).
A recent result of Levine \cite{Le} which relies on
unpublished work of Hopkins and Morel asserts an isomorphism
$\Omega^n(-)\cong MGL^{2n,n}(-)$ between Levine-Morel
and Voevodsky algebraic cobordism for smooth quasiprojective
varieties. In particular, algebraic cobordism is representable
in the motivic stable homotopy category.

In short, algebraic cobordism is to algebraic
varieties what complex cobordism $MU^*(-)$ is to topological
manifolds.

\medskip

The above fundamental results being established, it is
high time for computations, which have been carried out
only in a very small number of cases (see e.g. \cite{VY}
and \cite{Ya}).
The present article focuses
on cellular varieties $X$, for which the additive structure
of $\Omega^*(X)$ is easy to describe: it is
the free $\L$-module generated by the cells
(see the next section for more precise definitions,
statements, proofs and references). So additively,
algebraic cobordism for cellular varieties behaves exactly
as Chow groups do. Of course, algebraic K-theory
also behaves in a similar way, but we will
restrict our comparisons here and below to Chow groups.
There is a ring homomorphism $\Omega^*(X) \to MU^{2*}(X(\C)^{an})$
which for cellular varieties is an isomorphism,
see Section 2.2 and the appendix.
However, computations in $\Omega^*(X)$ become more transparent and suitable
for algebro-geometric applications if
they are done by algebro-geometric methods
rather than by a translation of the already existing results for
$MU^{2*}(X(\C)^{an})$ (e. g. those of Bressler and Evens,
see \cite{BE} and below), especially if the latter
were obtained by topological methods which do not have
counterparts in algebraic geometry.

Let us concentrate on complete flag varieties $X=G/B$ where $B$ is a Borel
subgroup of a connected split reductive group $G$ over $k$.  In the case where
$G=GL_n(k)$, the cobordism
ring $\Omega^*(X)$ may be described as the quotient of a free polynomial ring
over $\L$ with generators $x_i$ being the first Chern classes of certain
line bundles on $X$ and explicit relations.
More precisely, we show (see Theorem \ref{Borel}):
\begin{thm}
The cobordism ring $\Omega^*(X)$ is isomorphic to the graded ring $\L[x_1,\ldots,x_n]$
of polynomials with coefficients in the Lazard ring $\L$
and $\deg~x_i=1$, quotient by the ideal $S$ generated
by the homogeneous
symmetric polynomials of strictly positive degree:
$$\Omega^*(X)\simeq \L[x_1,\ldots,x_n]/S.$$
\end{thm}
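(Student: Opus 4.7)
The plan is to construct a surjection $\bar\phi : \L[x_1,\ldots,x_n]/S \to \Omega^*(G/B)$ sending $x_i$ to $c_1^\Omega(L_i)$, where $L_i = V_i/V_{i-1}$ arises from the tautological flag $0 = V_0 \subset V_1 \subset \cdots \subset V_n = \Oc_{G/B}^{n}$, and then identify the two sides via a rank count. First, to see that the ideal $S$ is killed, recall that the total Chern class in any oriented cohomology theory is multiplicative on short exact sequences; applied to the filtration of the trivial bundle $\Oc^{n}$ by the $V_i$ this gives
\[
1 \;=\; c^\Omega(\Oc^{n}) \;=\; \prod_{i=1}^n c^\Omega(L_i) \;=\; \prod_{i=1}^n (1 + x_i),
\]
so $e_k(x_1,\ldots,x_n) = 0$ in $\Omega^*(G/B)$ for every $k \geq 1$, and $\bar\phi$ is well defined. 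Unlike the tensor-product formula for line bundles, the multiplicativity on exact sequences involves no input from the formal group law, which is why one obtains the \emph{classical} elementary symmetric polynomials rather than any formal-group-law deformation.

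Next, for surjectivity, I would realise $G/B$ as an iterated projective bundle: the partial flag variety of $k$-step flags is a $\P^{n-k}$-bundle over the partial flag variety of $(k{-}1)$-step flags, for $k = 1,\ldots,n-1$. The projective bundle formula for algebraic cobordism, a defining feature of any oriented cohomology theory, then inductively shows that $\Omega^*(G/B)$ is generated as an $\L$-algebra by the first Chern classes of the successive tautological line bundles, i.e.\ by $x_1,\ldots,x_n$, so $\bar\phi$ is surjective.

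To finish, I would compare ranks. By the additive structure theorem for cellular varieties recalled in Section~2, $\Omega^*(G/B)$ is a free graded $\L$-module of rank $n!$ with basis indexed by the Bruhat cells. On the other side, $\L[x_1,\ldots,x_n]/S = \L \otimes_\Z \Z[x_1,\ldots,x_n]/S_\Z$, and the coinvariant algebra $\Z[x_1,\ldots,x_n]/S_\Z$ is classically free of rank $n!$ over $\Z$, with basis the Schubert polynomials. Therefore $\bar\phi$ is a surjective $\L$-linear map between two free $\L$-modules of the same finite rank, hence an isomorphism (Vasconcelos). The main obstacle is precisely ruling out the possibility of extra relations on the cobordism side beyond the classical symmetric ones produced by the Whitney sum formula: this is exactly what the rank comparison settles, but it rests on the non-trivial input that both the cellular additive structure theorem for $\Omega^*$ and the Schubert-polynomial basis of the coinvariant algebra produce matching graded Hilbert series, so that the specialisation $\L \to \Z$ indeed recovers Borel's presentation for Chow groups.
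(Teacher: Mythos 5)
Your proposal is correct, but it establishes injectivity by a genuinely different mechanism than the paper. The paper never separates "well-definedness" from "no extra relations": it runs the iterated projective bundle formula all the way through, obtaining at each stage the \emph{exact} presentation $A^*(P_i)\cong A^*(P_{i-1})[x_{n-i+1}]/(h_{n-i+1}(x_{n-i+1},\ldots,x_n))$ with the complete homogeneous symmetric polynomials as relations, and then finishes with a purely algebraic verification that the ideal $(h_n(x_n),\ldots,h_1(x_1,\ldots,x_n))$ equals $S$. You instead use the Whitney sum formula on the tautological filtration of $\Oc^n$ to kill $e_1,\ldots,e_n$ (hence $S$), the projective bundle formula only for generation, and then a rank count: both sides are free $\L$-modules of rank $n!$ (Theorem \ref{t.cellular} on one side, the classical coinvariant algebra on the other), so the surjection is an isomorphism by Vasconcelos. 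All three ingredients are available in the paper, and the Vasconcelos step is sound since $\L$ is commutative and the modules are finitely generated free of equal rank; note also that the total (ungraded) rank suffices here, so your closing remark about matching graded Hilbert series is not actually needed. What your route buys is avoiding the somewhat fiddly identification of the ideal generated by the $h$'s with $S$; what it costs is that it does not prove the second, relative statement of Theorem \ref{Borel} for flag bundles $\F(E)$ over an arbitrary smooth base $Y$, where no cellular structure theorem (and hence no rank count) is available — there the paper's direct computation of the presentation is essential.
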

This generalizes a theorem
of Borel \cite{Bo} on the Chow ring (or equivalently the singular
cohomology ring) of a flag variety to its
algebraic cobordism ring.

\medskip

The Chow ring of the flag variety has a natural basis given by the
{\em Schubert cycles}.
The central problem in Schubert calculus was to find polynomials
(later called Schubert polynomials)
representing the Schubert cycles in the Borel presentation. This problem was solved
independently by Bernstein--Gelfand--Gelfand \cite{BGG}  and Demazure \cite{De} using
{\em divided difference operators} on the Chow ring (most of the ingredients were already contained in a
manuscript of Chevalley \cite{Che}, which for many years
remained unpublished). Explicit formulas for Schubert polynomials give an algorithm
for decomposing the product of any two Schubert cycles into a linear combination of
other Schubert cycles with integer coefficients.

The complex (as well as the algebraic) cobordism ring of the flag variety also has a
natural generating set given by the {\em Bott-Samelson resolutions} of the  Schubert
cycles (note that the latter are not always smooth and so, in general, do not define
any cobordism classes). For the complex cobordism ring, Bressler and Evens described
the cobordism classes of Bott-Samelson resolutions in the Borel presentation using
{\em generalized divided difference operators} on the cobordism ring \cite{BE2,BE}
(we thank Burt Totaro from whom we first learned about this reference).
Their formulas for these operators are not algebraic and  involve a passage to the classifying
space of a compact torus in $G$ and homotopy theoretic considerations
(see \cite[Corollary-Definition 1.9, Remark 1.11]{BE2} and  \cite[Proposition 3]{BE}).
One of the goals of the present paper is to prove an algebraic formula for
the generalized divided difference operators (see Definition \ref{d.operator} and Corollary
\ref{c.Gysin}).
This formula in turn implies explicit purely algebraic formulas for the polynomials
(now with coefficients  in the Lazard ring $\L$) representing the classes of
Bott-Samelson resolutions. Note that each such polynomial contains the respective
Schubert polynomial as the lowest degree term (but in most cases also has non-trivial
higher order terms). We also give an algorithm for decomposing the product of
two Bott-Samelson resolutions into a linear combination of other Bott-Samelson
resolutions with coefficients in $\L$.

We now formulate our main theorem (compare Theorem \ref{main}), which can be viewed
as an algebro-geometric analogue of the results of Bressler-Evens
\cite[Corollary 1, Proposition 3]{BE}. Let $I=(\a_1,\ldots,\a_l)$ be an $l$-tuple of
simple roots of $G$, and $R_I$ the corresponding Bott-Samelson resolution of the Schubert
cycle $X_I$ (see Section 3 for the precise definitions). Recall that there
is an isomorphism between the Picard group of the flag variety
and  the weight lattice of $G$ such that very ample line bundles map to
strictly dominant weights (see, for instance, \cite[1.4.3]{Brion}).
We denote by $L(\l)$ the line bundle on $X$ corresponding to a weight $\l$, and
by $c_1(L(\l))$ its first Chern class in algebraic cobordism.
For each $\a_i$, we define the operator $A_i$ on $\Omega^*(X)$ in a purely algebraic
way (see Section \ref{ss.operators} for the rigorous definition for arbitrary reductive groups).
Informally, the operator $A_i$
can be  defined in the case $G=GL_n$ by the formula
$$A_i=(1+\sigma_{\a_i})\frac1{c_1(L(\a_i))},$$
where $\sigma_{\a_i}$ acts on the variables $(x_1,\ldots,x_n)$ by the transposition
corresponding to $\a_i$. Here we use that the Weyl group of $GL_n$ can be identified
with the symmetric group $S_n$ so that the simple reflections $s_{\a_i}$ correspond to
elementary transpositions (see Section 2 for more details).
Note that the $c_1(L(\a_i))$ can be written
explicitly as polynomials in $x_1$,\ldots,$x_n$ using the formal group law (see Section 2).
\begin{thm}\label{t.intro2}
For any complete flag variety $X=G/B$ and any
tuple $I=(\a_1,\ldots,\a_l)$ of simple roots of $G$,
the class of the Bott-Samelson resolution $R_I$ in the
algebraic cobordism ring $\Omega^*(X)$
is equal to
$$A_{l}\ldots A_{1}R_e, $$
where $R_e$ is the class of a point.
\end{thm}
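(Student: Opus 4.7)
The plan is to induct on the length $l$ of the tuple $I$, using the recursive realisation of Bott--Samelson resolutions as iterated $\P^1$-bundles together with the geometric interpretation of the operators $A_i$ provided by Corollary~\ref{c.Gysin}.

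The base case $l=0$ is immediate: the empty Bott--Samelson is the single point $\{eB\}\hookrightarrow X$, whose class in $\Omega^{\dim X}(X)$ is by definition $R_e$.

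For the inductive step, set $I' = (\a_1,\ldots,\a_{l-1})$ and let $\pi_l\colon G/B \to G/P_{\a_l}$ denote the smooth $\P^1$-bundle projection associated to the minimal parabolic $P_{\a_l}$. Starting from the standard description $R_I = P_{\a_1}\times_B\cdots\times_B P_{\a_l}/B$, the Bott--Samelson morphism $f_I\colon R_I \to G/B$ fits into a Cartesian square whose lower-left corner is $R_{I'}$, whose bottom arrow is $\pi_l \circ f_{I'}$, and whose left vertical arrow $g\colon R_I \to R_{I'}$ is itself a $\P^1$-bundle obtained by base change from $\pi_l$. Since $\pi_l$ is smooth and $f_{I'}$ is projective, smooth base change for algebraic cobordism \cite{LM} yields
$$
[R_I] = (f_I)_*(1_{R_I}) = (f_I)_* g^*(1_{R_{I'}}) = \pi_l^*(\pi_l\circ f_{I'})_*(1_{R_{I'}}) = \pi_l^*(\pi_l)_*[R_{I'}],
$$
where the last equality uses functoriality of pushforward. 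Corollary~\ref{c.Gysin} then identifies $\pi_l^*(\pi_l)_*$ with the algebraic operator $A_l$, giving $[R_I] = A_l [R_{I'}]$. The inductive hypothesis $[R_{I'}] = A_{l-1}\cdots A_1 R_e$ closes the induction.

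The genuine content of the statement is thus entirely absorbed into Corollary~\ref{c.Gysin}, which asserts that the algebraically defined operator $A_i$---a Weyl-group symmetrisation composed with the formal inverse of $c_1(L(\a_i))$ computed via the universal formal group law---coincides with the geometric Gysin composite $\pi_i^*(\pi_i)_*$ for the minimal parabolic projection. This is the one place where the formal-group-law structure of $\Omega^*$ and the Borel-type presentation (Theorem~\ref{Borel}) enter in a nontrivial way, and it is the algebro-geometric analogue of Bressler--Evens' topological computation; I expect this identification, rather than the induction above, to be the main obstacle. Granted Corollary~\ref{c.Gysin}, the theorem is a formal consequence of flat base change and the recursive tower structure of Bott--Samelson resolutions.
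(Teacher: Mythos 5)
Your proposal is correct and follows essentially the same route as the paper: induction on $l$, the Cartesian square realizing $R_I$ as the fiber product $R_{I'}\times_{G/P_{\a_l}}G/B$, the base-change axiom to get $Z_I=\pi_l^*(\pi_l)_*Z_{I'}$, and then Corollary~\ref{c.Gysin} to replace $\pi_l^*(\pi_l)_*$ by $A_l$. You also correctly locate the real content in Corollary~\ref{c.Gysin} (i.e.\ in the push-forward formula of Proposition~\ref{p.general}), exactly as the paper does.
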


This theorem reduces the computation of the products
of the geometric Bott-Samelson classes to the  products in the polynomial ring
given by the previous theorem. Note that in the cohomology case analogously
defined operators $A_i$ coincide with the {\em divided differences operators} defined in
\cite{BGG,De}, so our theorem generalizes \cite[Theorem 4.1]{BGG} and \cite[Theorem 4.1]{De}
for Schubert cycles in cohomology and Chow ring, respectively, to Bott-Samelson
classes in algebraic cobordism.

Note that in the case of Chow ring, the theorem analogous to Theorem \ref{t.intro2}
has two different proofs. A more algebraic proof using the Chevalley-Pieri formula
was given by Bernstein--Gelfand--Gelfand (\cite[Theorem 4.1]{BGG}, see also Section
\ref{s.CP} for a short overview). Demazure gave a more geometric proof by identifying
the divided difference operators with the push-forward morphism for certain Chow rings
(\cite[Theorem 4.1]{De}, see also Section \ref{s.main}).
At first glance, it seems that the former proof is easier to extend to the algebraic
cobordism. Indeed, we were able to extend the main ingredient of this proof, namely,
the algebraic Chevalley--Pieri formula (see Proposition \ref{p.Chevalley}).
However, the rest of the Bernstein--Gelfand--Gelfand argument fails for
cobordism (see Section \ref{s.CP} for more details) while the more geometric argument
of Demazure can be extended to cobordism with some extra work. For the complex cobordism
ring this was done by Bressler and Evens \cite{BE2,BE}. To describe the push-forward
morphism they used results from homotopy theory, which are not (yet) applicable
to algebraic cobordism.
In our article, we also follow Demazure's approach. A key ingredient for extending
this approach to algebraic cobordism is a formula for the push-forward in algebraic
cobordism for projective line fibrations due to Vishik, see Proposition \ref{p.general}.
We provide a new proof of this formula using the {\em double point relation} in
cobordism introduced by Levine and Pandharipande \cite{LP}.
In general, push-forwards (sometimes also called ``transfers'' or ``Gysin homomorphisms'') for algebraic
cobordism are considerably more intricate than the ones for Chow groups. Consequently,
their computation, which applies to any orientable cohomology theory, is more
complicated.

Using the ring isomorphism $\Omega^*(X)\simeq MU^{2*}(X(\C)^{an})$
for cellular varieties, it seems possible
to deduce our Theorem \ref{t.intro2}
from the results of Bressler--Evens \cite{BE2,BE}
on complex cobordism (the main task would be to compare our algebraically defined
operators $A_i$ with theirs).  We will not exploit this approach.
Instead, all our proofs are purely algebraic or
algebro-geometric. Conversely, we note that all our  proofs concerning
algebraic cobordism ring of the flag variety (such as the proof of
Proposition \ref{p.Chevalley}) may be easily translated to proofs
for the analogue statements concerning the complex cobordism ring.
\medskip

The article \cite{BE} does
not contain any computations. It would be interesting
to do some computation using their algorithm
and then compare them with our approach, which we consider to
be the easier one due to our explicit formula for the product of
a Bott-Samelson class with the first Chern class (see formula 5.1) based
on our algebraic Chevalley-Pieri formula. (Note also that the notations
of \cite{BE} are essentially consistent with \cite{BGG},
but not always with \cite{Ma}. We rather
stick to the former than to the latter.)

\medskip

This paper is organized as follows. In the next section, we give some
further background
on algebraic cobordism, in particular, the formula for the push-forward
mentioned above. In the case of the flag variety for $GL_n$,
we describe the multiplicative
structure of its algebraic cobordism ring.
In the third section, we recall the definition of Bott-Samelson resolutions and
then express the classes of Bott-Samelson
resolutions as polynomials with coefficients in the Lazard ring.
Section 4 contains an algebraic Chevalley-Pieri
formula and a short discussion of why the proof of \cite{BGG}
for singular cohomology does not carry over to algebraic
cobordism.
The final section contains an
algorithm for computing the products of Bott-Samelson classes in terms of other
Bott-Samelson classes as well as some examples and explicit computations.

\medskip

Our main results are valid for the flag variety of an arbitrary reductive group $G$,
but can be made more explicit in the case $G=GL_n$ using Borel presentation given by
Theorem \ref{Borel}. So we will use the flag variety for $GL_n$ as the main illustrating
example whenever possible.
One might conjecture that the algebraic cobordism rings of
flag varieties with respect to other reductive groups $G$ also
allow a Borel presentation as polynomial rings over $\L$
in certain first Chern classes
modulo the polynomials fixed by the appropriate Weyl groups
(at least when passing to rational coefficients),
because the corresponding statement is valid
for singular cohomology resp. Chow groups
(compare \cite{Bo} resp. \cite{Dem}).

\medskip

After most of our preprint was finished, we learned that
Calm\`es, Petrov and Zainoulline are also
working on Schubert calculus for algebraic cobordism. It will be
interesting to compare their results and proofs to ours (their preprint is now
available, see \cite{CPZ}).
\medskip

We are grateful to Paul Bressler and Nicolas Perrin for useful discussions
and to Michel Brion and the referee for valuable comments on earlier versions of this article.

\section{Algebraic cobordism groups, push-forwards and cellular varieties}

 We briefly recall
the geometric definition of algebraic cobordism \cite{LP} and some of its basic
properties as established in \cite{LM}. For more details see \cite{LM,LP}.
Recall that (up to sign) any element
in the algebraic cobordism group $\Omega^n(X)$ for a scheme $X$
(separated, of finite type over $k$)
may be represented by a projective morphism $Y \to X$
with $Y$ smooth and $n=\dim(X)-\dim(Y)$, the relations
being the ``double point relations'',
which we explain further below.
In particular, $\Omega^*(X)$ only lives in degrees $\le \dim X$,
which we will use several times throughout the paper. Similar to the Chow
ring $CH^*$, algebraic cobordism
$\Omega^*$ is a functor on the category
of smooth varieties over $k$, covariant for projective and
contravariant for smooth and more generally lci morphisms,
which allows a theory of Chern classes. However,
the map from the Picard group of a smooth variety $X$
to $\Omega^1(X)$ given by the first
Chern class is neither a bijection nor a homomorphism anymore
(unlike the corresponding map in the Chow ring case).
Its failure of being a group homomorphism is encoded
in a {\em formal group law} that can be constructed from $\Omega^*$.
More precisely, any algebraic {\it orientable} cohomology theory
allows by definition a calculus of Chern classes,
and consequently the construction of a formal group
law. A formal group law is a formal power series $F(x,y)$ in two variables
such that for any two line bundles $L_1$ and $L_2$ we have the following identity
relating their first Chern classes:
$$c_1(L_1\otimes L_2)=F(c_1(L_1),c_1(L_2)).$$
E. g. the formal group law for $CH^*$ is additive, that is, $F(x+y)=x+y$.
Algebraic cobordism is the universal one among the algebraic orientable cohomology
theories. In what follows, $F(x,y)$ will always denote the universal formal group law
corresponding to algebraic cobordism unless stated otherwise.

In this and in many other ways - as the computations below
will illustrate - algebraic cobordism is a refinement
of Chow ring, and one has a natural isomorphism
of functors $\Omega^*(-) \otimes_{\L} \Z \cong CH^*(-)$
(see \cite{LM} where all these results are proved).
Here and in the sequel, $\L$ denotes the
Lazard ring, which classifies one-dimensional commutative
formal group laws and is isomorphic to the graded polynomial
ring $\Z[a_1,a_2,\ldots]$ in countably many variables
\cite{La}, where we put $a_i$ in degree $-i$.
When considering polynomials $p(x_1,....x_n)$
over $\L$ with $\deg(x_i)=1$, we will distinguish
the (total) {\it degree} and the {\it polynomial degree}
of $p(x_1,...,x_n)$.

Note that the Lazard ring is isomorphic to the algebraic (as well as complex)
cobordism ring of a point. In particular, its elements can be represented by the
cobordism classes of smooth varieties. In what follows, we use this geometric
interpretation.
\medskip

We are also going to use a geometric interpretation
of the formal group law, namely, the {\em double point relation}.
This is an equality for elements in the algebraic cobordism ring established
in \cite{LP}. We recall the definition for the reader's convenience.

\medskip

{\bf Double point relation:}

Assume that we have three smooth hypersurfaces $A$, $B$ and $C$ on a smooth
variety $Z$ such that the following conditions hold
\begin{enumerate}

\item $C$ is linearly equivalent to $A+B$
\item $A$, $B$ and $C$ have transverse pairwise intersections
\item $C$ does not intersect $A\cap B$
\end{enumerate}
\noindent
Then we have the following {\em double point relation}.
Denote by $D$ the intersection
$A\cap B$. We have
$$[C\to Z]=[A\to Z]+[B\to Z]-[\P_D\to Z]$$
in $\Omega^*(X)$, where $\P_D=\P(\Oc_D\oplus N_{A/D})=\P_D(N_{B/D}\oplus\Oc_D)$ and
the map $\P_D\to Z$ is the composition of the natural projection $\P_D\to D$ with the
embedding $D\subset Z$. Here $N_{A/D}$ and $N_{B/D}$ are the normal bundles to $D$ in
$A$ and $B$, respectively. The second condition ensures that
$\P(\Oc_D\oplus N_{A/D})=\P_D(N_{B/D}\oplus\Oc_D)$
(since $L(C)|_D=(L(A)\otimes L(B))|_D=N_{A/D}\otimes N_{B/D}$ is trivial).

This formulation is a special case of the extended double point relation in
\cite[Lemma 5.2]{LP}.
The double point relation allows to express geometrically the discrepancy
between the additive formal group law and the universal one. Namely, since
$C=F(A,B)$ by the first condition, we get
$$A+B-F(A,B)=[\P_D\to Z].$$
We will use this equation when proving Proposition \ref{p.general}.

\medskip

We will also use repeatedly the projective bundle formula, which we recall below for
the reader's convenience.
For more details see \cite[Section 1.1]{LM} and \cite[3.5.2]{Ma}.

\medskip

{\bf Projective bundle formula}: Let $E\to X$ be a vector bundle of rank $r$ over $X$.
Denote by
$Y=\P(E^*)$ the variety of hyperplanes  of $E$, and by $\pi$ the natural projection
$\pi:Y\to X$. The variety  $\P(E^*)$ is a fibration over $X$ with
fibers isomorphic to $\P^{r-1}$. Note that equivalently $\P(E^*)$ can be defined as
the variety of one-dimensional
quotients of $E$ since there is a canonical isomorphism between the variety of
hyperplanes and the variety of quotients by hyperplanes in a vector space.
This is how $\P(E^*)$ is defined in  \cite[Section 1.1]{LM}  (where it is denoted by
$\P(E)$). Let $A^*(-)$ be any oriented cohomology
theory. Denote by $\xi$ the first
Chern class of the tautological quotient
line bundle $\Oc_E(1)$ on $Y$ whose restriction on each fiber of $Y$ over $X$ coincides
with $\Oc_{\P^{r-1}}(1)$. The first Chern can be defined as $\xi=s^*s_*(1_Y)$ where
$s:Y\to\Oc_E(1)$ is the zero section and
$1_Y\in A^0(Y)$ is the multiplicative unit element.  Then there is a ring isomorphism:
$$A^*(Y)=A^*(X)[\xi]/(\sum_{j=0}^r(-1)^jc_j(\pi^*E)\xi^{r-j}).$$
The isomorphism identifies a polynomial $b_0+b_1\xi+\ldots+b_{n-1}\xi^{n-1}$
in $A^*(X)[\xi]$ with the element
$\pi^*b_0+(\pi^*b_1)\xi+\ldots+(\pi^*b_{n-1})\xi^{n-1}$ in $A^*(Y)$. In particular,
$A^*(Y)$ splits into the direct sum
$\pi^*A^*(X)\oplus \xi \pi^*A^*(X)\oplus\ldots\oplus \xi^{n-1}\pi^*A^*(X)$.

Note that the relation
$$\sum_{j=0}^r(-1)^jc_j(\pi^*E)\xi^{r-j}=0$$
admits the following alternative description.
Consider a short exact sequence of vector bundles on $Y$:
$$0\to\tau_E\to \pi^*E\to\Oc_E(1)\to 0,$$ where $\tau_E$ is the tautological hyperplane
bundle on $Y$. By the Whitney sum formula we have that the
total Chern class $c(\pi^*E)$ is equal to the product $c(\tau_E)c(\Oc_E(1))$. Since
$c(\Oc_E(1))=1+\xi$
we have $c(\pi^*E)=c(\tau_E)(1+\xi)$. We now divide this identity by $(1+\xi)$
(that is, multiply by
$\sum_{j=0}^{r+\dim X-1} (-1)^{j}\xi^j$) and get that
$c(\tau_E)=c(\pi^*E)(\sum_{j=0}^{r+\dim X-1} (-1)^{j}\xi^j)$. In particular,
$$c_{r}(\tau_E)=(-1)^r\sum_{j=0}^r(-1)^jc_j(\pi^*E)\xi^{r-j},$$
so we can interpret the relation above as the vanishing of the $r$-th Chern class
of the bundle $\tau_E$ (which has rank $r-1$).

\subsection{A formula for the push-forward}\label{s.Gysin}

Let $X$ be a smooth algebraic variety,
and $E\to X$ a vector bundle of rank two on $X$.
Consider the projective line fibration $Y=\P(E)$ defined as the variety of all lines
in $E$. We have a natural projection $\pi:Y\to  X$ which is projective
and hence induces a
{\em push-forward} (or {\em transfer}, sometimes also called {\em Gysin map})
$\pi_*:\Omega^*(Y)\to\Omega^*(X)$.
We now state
a formula for this push-forward. Note that
this formula is true not only for algebraic cobordism but for any
orientable cohomology theory, as the proofs remain true in this more general
case.

Consider the ring  of formal power series in two variables $y_1$ and $y_2$ with
coefficients in $\Omega^*(X)$. Define the operator $A$ on this ring by the formula
$$A(f)=(1+\sigma)\frac{f}{F(y_1,\chi(y_2))},$$
where $[\sigma(f)](y_1,y_2):=f(y_2,y_1)$.
Here $F$ is the universal formal group law (or more generally, the one
of the orientable cohomology theory one considers) and $\chi$ is the inverse for the formal group
law $F$, that is, $\chi$ is uniquely determined by the equation $F(x,\chi(x))=0$
(we use notation from \cite[2.5]{LM}).
The operator $A$ is an analog of the {\em divided difference operator} introduced
in \cite{BGG,De}. In the case of Chow rings, our definition coincides with the classical
divided difference operator, since the formal group law for Chow ring is additive,
that is, $F(x,y)=x+y$ and $\chi(x)=-x$.
Though $A(f)$ is defined as a fraction, it is easy to write it as a formal power series
as well (see Section 5). Such a power series is unique since
$F(y_1,\chi(y_2))=y_1-y_2+\ldots$
is clearly not a zero divisor.
E.g. we have
$$A(1)=\frac{x+\chi(x)}{x\chi(x)}=q(x,\chi(x))=
-a_{11}-a_{12}(x+\chi(x))+\ldots,$$
where $x=F(y_1,\chi(y_2))$, and $q(x,y)$ is the power series uniquely determined by the
equation $F(x,y)=x+y-xyq(x,y)$. In particular, since $F(x,\chi(x))=0$ by definition of the power series
$\chi(x)$, we have $x+\chi(x)-x\chi(x)q(x,\chi(x))=0$ which justifies the second equality.
For the last equality, we used computation of the first few terms of
$F(x,y)$ and $\chi(x)$ from \cite[2.5]{LM}. Here $a_{11}$, $a_{12}$ etc.  denote the
coefficients of the universal formal group law, that is,
$$F(x,y)=x+y+a_{11}xy+a_{12}xy^2+\ldots.$$
The coefficients $a_{ij}$ are the elements of the Lazard ring $\L^*$, e.g.
$a_{11}=-[\P^1]$, $a_{12}=a_{21}=[\P^1]^2-\P^2$ (see \cite[2.5]{LM}).
We also have
$$A(y_1)=y_2A(1)+\frac{F(x,y_2)-y_2}{x}=
y_2q(x,\chi(x))-y_2q(x,y_2)+1=1+a_{12}y_1y_2+\ldots.$$
The pull-back $\pi^*:\Omega^*(X)\to\Omega^*(Y)$ gives $\Omega^*(Y)$
the structure of an $\Omega^*(X)$-module.
Recall that by the projective bundle formula we have an isomorphism of
$\Omega^*(X)$-modules
$$\Omega^*(Y)\cong\pi^*\Omega^*(X)\oplus \xi\pi^*\Omega^*(X),$$
where $\xi=c_1(\Oc_E(1))$.
Since the push-forward is a homomorphism of $\Omega^*(X)$-modules,
it is enough to
determine the action of $\pi_*$ on $1_Y$ and on $\xi$.
The following result is a special case of \cite[Theorem 5.30]{Vi}, which gives an explicit formula
for the push-forward $\pi_*$ for vector bundles of arbitrary rank.
\begin{prop}\cite[Theorem 5.30]{Vi}\label{p.general}
Let $\xi_1$ and $\xi_2$ be the Chern roots of $E$, that is, formal variables
satisfying the conditions $\xi_1+\xi_2=c_1(E)$ and $\xi_1\xi_2=c_2(E)$.
Then the push-forward acts on $1_Y$ and $\xi$ as follows:
$$\pi_*(1_Y)=[A(1)](\xi_1,\xi_2),$$
$$\pi_*(\xi)=[A(y_1)](\xi_1,\xi_2),$$
where $A(1)$ and $A(y_1)$ are the formal power series in two variables defined above.

Since $A(1)$ and $A(y_1)$ are symmetric in $y_1$ and $y_2$, they can be written as
power series in $y_1+y_2$ and $y_1y_2$. Hence, the right hand sides are power series
in $c_1(E)$ and $c_2(E)$ and even polynomials (as all terms of degree greater
than $\dim~X$ will vanish by \cite{LP}). So the right hand sides indeed define elements
in $\Omega^*(X)$.
\end{prop}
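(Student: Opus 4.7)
Plan: My strategy is to verify the two formulas by reducing, via the splitting principle, to the case $E = L_1 \oplus L_2$, exhibiting the two natural sections of $\pi$ to produce two $\Omega^*(X)$-linear equations satisfied by $\pi_*(1_Y)$ and $\pi_*(\xi)$, and checking that the proposed formulas satisfy those equations; the Levine--Pandharipande double point relation is what makes the crucial identification of a divisor class in $\Omega^*(Y)$ with the first Chern class of its associated line bundle.

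\emph{Splitting and sections.} First I pull back along a map under which $E = L_1 \oplus L_2$ with $c_1(L_i) = \xi_i$; both sides of the claimed identity are symmetric in $\xi_1, \xi_2$, so it suffices to check this case. The split bundle $Y = \P(L_1 \oplus L_2)$ has two disjoint canonical sections $s_1, s_2 \colon X \to Y$, with smooth images $D_1, D_2$. The divisor $D_i$ is the zero scheme of the composite $\pi^*L_j \hookrightarrow \pi^*E \twoheadrightarrow \Oc_E(1)$, that is, of a canonical section of $\Oc_E(1) \otimes \pi^* L_j^{-1}$, where $\{i,j\} = \{1,2\}$. Applying the double point relation of \cite{LP} to a suitable pencil cutting out $D_i$ identifies
\begin{equation*}
[D_i \to Y] = c_1\bigl(\Oc_E(1) \otimes \pi^* L_j^{-1}\bigr) = F(\xi, \chi(\xi_j))
\end{equation*}
in $\Omega^*(Y)$, with the last equality coming from the formal group law for Chern classes of tensor products.

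\emph{Linear system and verification.} Reducing $F(\xi, \chi(\xi_j))$ modulo the projective bundle relation $(\xi - \xi_1)(\xi - \xi_2) = 0$ yields a canonical expansion $F(\xi, \chi(\xi_j)) = a_j + b_j \xi$ with $a_j + b_j \xi_k = F(\xi_k, \chi(\xi_j))$, so in particular $a_j + b_j \xi_j = 0$ and $a_j + b_j \xi_i = F(\xi_i, \chi(\xi_j))$ for $i \ne j$. Since $\pi \circ s_i = \mathrm{id}_X$ we obtain $\pi_* [D_i] = 1$, which upon applying $\pi_*$ to the above expansion gives the two equations $a_j \pi_*(1_Y) + b_j \pi_*(\xi) = 1$ ($j = 1, 2$). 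Substituting the proposed formulas into the $j = 1$ equation collapses it as
\begin{equation*}
\frac{a_1 + b_1 \xi_1}{F(\xi_1, \chi(\xi_2))} + \frac{a_1 + b_1 \xi_2}{F(\xi_2, \chi(\xi_1))} = 0 + \frac{F(\xi_2, \chi(\xi_1))}{F(\xi_2, \chi(\xi_1))} = 1,
\end{equation*}
and the $j = 2$ equation simplifies the same way; hence the proposed formulas do solve the system.

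\emph{Main obstacle and closing step.} The principal difficulty is uniqueness: the $2 \times 2$ coefficient matrix above has determinant proportional to $\xi_1 - \xi_2$, which is not a non-zero divisor in $\Omega^*(X)$ in general, so these two equations alone do not determine $\pi_*(1_Y)$ and $\pi_*(\xi)$. I would overcome this by working first in a universal example --- e.g.\ the tautological rank-2 bundle on a Grassmannian $\mathrm{Gr}(2, N)$ with $N$ large --- where $\xi_1, \xi_2$ are free generators of $\Omega^*$ up to symmetry and $(\xi_1 - \xi_2)^2 = c_1(E)^2 - 4c_2(E)$ is a non-zero divisor in low degrees, so the system determines a unique solution matching the proposed formulas; naturality of Chern classes and functoriality of $\pi_*$ then transport the identity to any $(X, E)$. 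Making this universal descent rigorous, together with the divisor-to-Chern-class step powered by the double point relation, is the substantive content of the argument; it is precisely where the Levine--Pandharipande framework replaces the homotopy-theoretic input used in \cite{BE}.
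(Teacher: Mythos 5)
Your first half is sound: the two disjoint sections of $\P(L_1\oplus L_2)$ do give the relations $a_j\pi_*(1_Y)+b_j\pi_*(\xi)=1$, and your check that the proposed power series satisfy them is correct (though note that the identification $[D_i\to Y]=c_1(\Oc_E(1)\otimes\pi^*L_j^{-1})$ is \cite[Lemma 5.1.11]{LM}, not the double point relation). The genuine gap is exactly where you locate it, and your proposed repair does not work. The determinant of your system is $(\xi_1-\xi_2)$ times a unit, and multiplication by $\xi_1-\xi_2$ fails to be injective in the relevant degrees on \emph{every} finite-dimensional model, universal or not: the unknowns live in $\Omega^{-1}(X)$ and $\Omega^{0}(X)$, and since $\L$ is unbounded below while $\Omega^{*>\dim X}(X)=0$, the element $a\cdot[pt]$ with $a\in\L^{-1-\dim X}$ is a nonzero class of cohomological degree $-1$ annihilated by every class of positive degree, in particular by $\xi_1-\xi_2$. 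Your phrase ``non-zero divisor in low degrees'' conflates cohomological degree with polynomial degree in the Chern classes; a degree $-1$ class can have arbitrarily large polynomial degree with coefficients in $\L^{<0}$. This is precisely the obstruction the paper analyzes at the end of Section \ref{s.CP} when explaining why the Bernstein--Gelfand--Gelfand duality argument breaks for cobordism. Repairing your argument would force a passage to the limit over $N$ (in effect to $MU^*(BU(2))$), which is the homotopy-theoretic input of \cite{BE2,BE} that the paper is written to avoid; and even then, transporting the identity from the Grassmannian to an arbitrary $(X,E)$ requires $E$ (after a twist) to be globally generated, a reduction you do not address.

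The paper avoids the degenerate linear system entirely by computing the two push-forwards separately and unconditionally. After twisting $E$ so that one Chern root vanishes, $\pi_*(\xi)=1$ follows from a section exactly as in your argument; the substantive term $\pi_*(1_Y)=[\pi:Y\to X]$ is then computed directly by applying the double point relation on $Z=Y\times_X Y$ to the diagonal $C$ and the two copies $A$, $B$ of $Y$ cut out by the sections, which yields $[Y\to X]=\pi_*{p_1}_*\bigl(ABq(A,B)\bigr)$ and hence the closed formula via the projection formula applied to $AB=\sigma_*1_X$. In other words, the double point relation is the mechanism that produces the value of $\pi_*(1_Y)$, not merely a consistency check on it; in your write-up it never actually enters at the point where new information is needed.
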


For the Chow ring and $K_0$, analogous statements were proved in
\cite[Propositions 2.3,2.6]{De} for certain morphisms $Y \to X$.
Note that for both of these theories, the formula for
$\pi_*(\xi)$ reduces to $\pi_*(\xi)=1$ since the corresponding formal group
laws do not contain terms of degree greater than two.
As Vishik showed (see \cite[Theorem 5.35]{Vi}), his formula is equivalent to
Quillen's formula \cite{Q} for complex cobordism, as also proved by Shinder
in the algebraic setting \cite{Sh}. We give a new geometric proof of Proposition
\ref{p.general} (that is, of Vishik's formula for rank two bundles)
based on the double point  relation in algebraic cobordism.

\begin{proof} First, note that replacing $E$ with $E_M=E\otimes M$ for an
arbitrary line bundle $M$ on $X$ does not
change the variety $Y$ and the map $\pi$. However, this does change the tautological
quotient line $c_1(\Oc_E(1))$. More precisely, we have the following isomorphism
of line bundles on $Y$ (compare e.g. \cite[Proof of Lemma 7.1]{LP}):
$$\pi^*M\otimes\Oc_E(1)=\Oc_{E_M}(1).$$
Let us denote by $\xi_M$ the first Chern class of the tautological quotient
line bundle $\Oc_{E_M}(1)$. The identity above implies that
$\xi_M=F(\xi,\pi^*c_1(M))$ or equivalently $\xi=F(\xi_M,\pi^*c_1(M^*))$.
Hence, to compute $\pi_*\xi$ it is enough to compute
$\pi_*1_Y$ and $\pi_*\xi_M$ for some $M$.
It is convenient to choose $M=L_1^*$ so that
$E_M$ has a trivial summand, and hence one of the
Chern roots of $E_M$ is zero. Thus we can assume that $E=\Oc_X\oplus L$.
In this case, the second formula of Proposition \ref{p.general} reduces to
$\pi_*\xi=1$, which is easy to show by similar methods as for the Chow ring.
Namely, consider the natural embedding $i:X=\P(\Oc_X)\to Y=\P(E)$.
Then $\xi=i_*1_X$ by \cite[Lemma 5.1.11]{LM}. Hence,
$\pi_*(\xi)=\pi_*i_*1_X=1_X$ since $\pi\circ i=id_X$.

It is more difficult to compute
$\pi_*1_Y$, which is the cobordism class of $[\pi:Y\to X]$. For the Chow ring, it is
zero by degree reasons, but for cobordisms it is not. E.g. even for a trivial bundle
$E$ we have $[\pi:Y\to X]=[\pi:X\times\P^1\to X]=-a_{11}1_X$. We compute $[\pi:Y\to X]$
by representing it as one of the terms in a suitable double point relation. Namely,
consider the variety $Z=Y\times_X Y$ and define three smooth hypersurfaces $A$, $B$
and $C$ on $Z$ as follows: $A=\{y\times i(\pi(y)): y\in Y)\}$,
$B=\{i'(\pi(y))\times y:y\in Y\}$ and  $C=\{y\times y: y\in Y\}$.
Here $i:X\to Y$ and $i':X\to Y$ are the embeddings $\P(\Oc_X)\to \P(E)$ and
$\P(L)\to \P(E)$, respectively.
Then it is easy to check (using again \cite[Lemma 5.1.11]{LM}) that
$A=c_1(p_1^*\Oc_E(1))$, $B=c_1(p_2^*\Oc_{E\otimes L^*}(1))$ and
$C=c_1(\Oc_E(1)\boxtimes \Oc_{E\otimes L^*}(1))$, where $p_1$, $p_2$
are the projections of $Z$ onto the first and the second factor, respectively.
Hence, we have the FGL identity $C=A+B-ABq(A,B)$ on $Z$, from which we can easily
get the double point relation we need.
Namely, apply $\pi_*{p_1}_*$ to both sides and get that
$$[\pi:Y\to X]=\pi_*{p_1}_* ABq(A,B),$$
because the other two terms cancel out.
The right hand side can be computed by the projection formula using that
$AB=\sigma_*1_X$, where $\sigma: X\to Z$ sends $x$ to $i(x)\times i'(x)$.
\end{proof}

If we identify $\Omega^*(Y)$ with the polynomial ring
$\Omega^*(X)[\xi]/(\xi^2-c_1(E)\xi+c_2(E))$ by the projective bundle formula, we can
reformulate Proposition \ref{p.general} as follows:
$$\pi_*(f(\xi))=[A(f(y_1))](\xi_1,\xi_2)$$
for any polynomial $f$ with coefficients in $\Omega^*(X)$ (where $f(y_1)$ in the right
hand side is regarded as an element in $\Omega^*(X)[[y_1,y_2]]$). In this form,
Proposition \ref{p.general} is consistent with
the classical formula for the push-forward
in the case of Chow ring (cf. \cite[Remark 3.5.4]{Ma}).
Indeed, since the formal group law  for Chow ring is additive we have  $A(1)=\frac{1}{y_1-y_2}+
\frac{1}{y_2-y_1}=0$ and $A(y_1)=\frac{y_1}{y_1-y_2}+\frac{y_2}{y_2-y_1}=1$.

\begin{defin}\label{d.operator} We define an $\Omega^*(X)$-linear operator
$A_\pi$ on $\Omega^*(Y)$ as follows. We have an isomorphism
$$\Omega^*(X)[[y_1,y_2]]/(y_1+y_2-c_1(E),y_1y_2-c_2(E))\cong\Omega^*(Y)$$  given
by $f(y_1,y_2)\mapsto f(\xi,c_1(E)-\xi))$.
Then
the operator $A$ on $\Omega^*(X)[[y_1,y_2]]$ descends to an operator
$A_\pi$ on $\Omega^*(Y)$, which can be described using the above isomorphism as follows
$$A_\pi:f(\xi,c_1(E)-\xi)\to [A(f(y_1,y_2))](\xi,c_1(E)-\xi).$$

We also define a $\Omega^*(X)$-linear endomorphism $\sigma_\pi$ of $\Omega^*(Y)$ by the formula:
$$\sigma_\pi:f(\xi,c_1(E)-\xi)=f(c_1(E)-\xi,\xi).$$
\end{defin}

The operator $A_\pi$ is well-defined since $A$  preserves the ideal
$(y_1+y_2-c_1(E),y_1y_2-c_2(E))$. Indeed, for any power series $f(y_1,y_2)$
symmetric in $y_1$ and $y_2$ (in particular, for $y_1+y_2-c_1(E)$ and
$y_1y_2-c_2(E)$) and any power series $g(y_1,y_2)$
we have $A(fg)=fA(g)$.
The operator $A_\pi$ decreases degrees by one, and its image is contained in
$\pi^*\Omega^*(X)\subset \Omega^*(Y)$, which can be identified  using the above
isomorphism for $\Omega^*(X)$ with the subring of symmetric polynomials in
$y_1$ and $y_2$.
Proposition
\ref{p.general} tells us that the push-forward $\pi_*:\Omega^*(Y)\to\Omega^*(X)$ is
the composition of $A_\pi$ with the isomorphism $\pi^*\Omega^*(X)\cong\Omega^*(X)$,
which sends (under the above identifications) a symmetric polynomial $f(y_1,y_2)$ into
the polynomial $g(c_1(E),c_2(E))$ such that $g(y_1+y_2,y_1y_2)=f(y_1,y_2)$.
Hence, we get the
following corollary, which we will use in the sequel.

\begin{cor} \label{c.Gysin} The composition
$\pi^*\pi_*:\Omega^*(Y)\to\Omega^*(Y)$ is equal to the operator
$A_{\pi}$:
$$\pi^*\pi_*=A_\pi.$$
\end{cor}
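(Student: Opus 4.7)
The plan is to exploit the projective bundle formula decomposition $\Omega^*(Y) = \pi^*\Omega^*(X) \oplus \xi\,\pi^*\Omega^*(X)$ and reduce the identity $\pi^*\pi_* = A_\pi$ to checking it on the two $\Omega^*(X)$-module generators $1_Y$ and $\xi$. First I would verify that both operators are $\Omega^*(X)$-linear in the sense that they commute with multiplication by pullbacks from $X$: for $A_\pi$ this is part of its definition (since the power series operator $A$ on $\Omega^*(X)[[y_1,y_2]]$ visibly has coefficients fixed and the symmetric ideal $(y_1+y_2-c_1(E), y_1y_2-c_2(E))$ is preserved), while for $\pi^*\pi_*$ it follows from the projection formula $\pi_*(\pi^*a\cdot z) = a\cdot\pi_*(z)$ followed by $\pi^*$.

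It therefore suffices to check $\pi^*\pi_*(1_Y) = A_\pi(1_Y)$ and $\pi^*\pi_*(\xi) = A_\pi(\xi)$. Here I would plug in Proposition~\ref{p.general}, which gives
$$\pi_*(1_Y) = [A(1)](\xi_1,\xi_2),\qquad \pi_*(\xi) = [A(y_1)](\xi_1,\xi_2)$$
in $\Omega^*(X)$. Since $A(1)$ and $A(y_1)$ are symmetric in $y_1,y_2$, they may be rewritten as power series $g_0(y_1+y_2, y_1y_2)$ and $g_1(y_1+y_2, y_1y_2)$, so that the right-hand sides above become $g_i(c_1(E), c_2(E)) \in \Omega^*(X)$. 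Applying $\pi^*$ and using the identifications $\pi^*c_1(E) = \xi + (c_1(E)-\xi)$ and $\pi^*c_2(E) = \xi(c_1(E)-\xi)$ (which are the defining relations of the presentation $\Omega^*(Y) \cong \Omega^*(X)[[y_1,y_2]]/(y_1+y_2-c_1(E), y_1y_2-c_2(E))$), the pulled-back classes are precisely $[A(1)](\xi, c_1(E)-\xi)$ and $[A(y_1)](\xi, c_1(E)-\xi)$. By Definition~\ref{d.operator}, these are exactly $A_\pi(1_Y)$ and $A_\pi(\xi)$, completing the comparison.

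The only real point requiring care — and what I expect to be the main bookkeeping obstacle rather than a conceptual one — is tracking the role of the formal Chern roots $\xi_1,\xi_2$ in Proposition~\ref{p.general} versus the honest classes $\xi$ and $c_1(E)-\xi$ in $\Omega^*(Y)$. Once one observes that pulling back a symmetric expression in $\xi_1,\xi_2$ to $Y$ recovers the same expression evaluated at $(\xi, c_1(E)-\xi)$, the corollary is essentially a rewriting of Proposition~\ref{p.general}, with no new geometric input required.
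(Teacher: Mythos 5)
Your proposal is correct and follows essentially the same route as the paper: the corollary is obtained there as an immediate consequence of Proposition \ref{p.general} together with the observation that $\pi^*$ sends a symmetric expression $g(c_1(E),c_2(E))$ to the same expression evaluated at $(\xi, c_1(E)-\xi)$, which is exactly your key step. Your explicit reduction to the generators $1_Y$ and $\xi$ via $\Omega^*(X)$-linearity is just a mild repackaging of the paper's module-level statement.
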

In the special case $Y=G/B$ and $X=G/P_i$ (and this is the main application we have, see Section
\ref{ss.operators}), the topological analogue
of this formula appeared in \cite[Corollary-Definition 1.9]{BE2}
for a different definition of $A_{\pi}$.

\subsection{Algebraic cobordism groups of cellular varieties}
We start with the definition of a cellular variety. The following definition is taken
from \cite[Example 1.9.1]{Fu},
other authors sometimes consider slight variations.
\begin{defin}
We say that a smooth variety $X$ over $k$ is ``cellular''
or ``admits a cellular decomposition'' if $X$ has a filtration
$\emptyset=X_{-1} \subset X_0 \subset X_1 \subset ... \subset X_{n}=X$
by closed subvarieties such that the $X_i - X_{i-1}$ are isomorphic
to a disjoint union of affine spaces $\A^{d_i}$ for all $i=0,...,n$,
which are called
the ``cells'' of $X$.
\end{defin}
Examples of cellular varieties include projective spaces and more general
Grassmannians, and complete flag varieties $G/B$ where $G$ is a split reductive
group and $B$ is a Borel subgroup.

The following theorem is a corollary of
\cite[Corollary 2.9]{VY}. We thank Sascha Vishik for explaining to us
how it can be deduced using the projective
bundle formula. The main point
is that for $d=\dim~X$
and $i$ an arbitrary integer, one has for $A=\Omega$ that
$\Omega^i(X)=: \Omega_{d-i}(X)$
is isomorphic to $Hom(A(d-i)[2(d-i)],M(X))$
using that in the notation of loc. cit.
$Hom(A(d-i)[2d-2i],M(X))$ is a direct summand
in $Hom(M(\P^{d-i}),M(X))=A_{d-i}(\P^{d-i} \times X)
=\oplus_{j=0}^{d-i}A_{d-i-j}(X)$,
and it is not difficult to see that it corresponds to
the summand with $j=0$.

\begin{thm}\label{t.cellular}
Let $X$ be a variety with a cellular
decomposition as in the definition above.
Then we have an isomorphism of graded abelian groups
(and even of $\L$-modules)
$$\Omega^*(X)\cong \oplus_i \L [d_i]$$
where the sum is taken over the cells of $X$. There is a basis in $\Omega^*(X)$ given by resolutions
of cell closures (choose one resolution for each cell).
\end{thm}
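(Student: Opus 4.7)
The plan is to argue by induction on the length $n$ of the cellular filtration, using the localization sequence for algebraic cobordism together with homotopy invariance. Writing things homologically (so $\Omega_{d-i}(X) = \Omega^i(X)$ for $d=\dim X$), for each $i$ set $U_i := X_i \setminus X_{i-1} = \bigsqcup_j \A^{d_{i,j}}$. The core tool is the right-exact localization sequence
$$\Omega_*(X_{i-1}) \xrightarrow{\iota_*} \Omega_*(X_i) \xrightarrow{j^*} \Omega_*(U_i) \to 0$$
from \cite{LM}, combined with homotopy invariance, which gives $\Omega_*(\A^d) = \L$ concentrated in the expected degree and generated by the fundamental class.

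The base case $X_0 = \bigsqcup \A^{d_{0,j}}$ is immediate from homotopy invariance. For the inductive step, assume $\Omega_*(X_{i-1})$ is a free $\L$-module on classes $[\widetilde{Y}_k \to X_{i-1}]$ indexed by the cells in $X_0,\ldots,X_{i-1}$, where each $\widetilde{Y}_k \to \overline{C}_k$ is a chosen resolution of singularities of the cell closure. For each new cell $C \subset U_i$ of dimension $d_{i,j}$, choose a resolution of singularities $\widetilde{C} \to \overline{C}$ of its closure in $X_i$; since this map is birational and an isomorphism over $C$, the class $[\widetilde{C} \to X_i]$ restricts under $j^*$ to the fundamental class of $C$, hence to a free generator of $\Omega_*(U_i)$. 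These classes therefore provide a set-theoretic section of $j^*$, so the sequence is surjective onto a free $\L$-module and $\Omega_*(X_i)$ is generated by these new classes together with the push-forwards $\iota_*[\widetilde{Y}_k]$ from the inductive hypothesis.

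The main obstacle is proving that these generators are $\L$-linearly independent, i.e.\ that $\iota_*$ is injective and the sequence splits. Here I would invoke exactly the argument indicated in the excerpt via \cite[Corollary 2.9]{VY}: under the identification $\Omega^i(X) \cong \mathrm{Hom}_{DM}(\L(d-i)[2(d-i)], M(X))$, the motive $M(X)$ of a cellular variety decomposes as $\bigoplus_j \L(d_{i,j})[2 d_{i,j}]$, and the formula $\mathrm{Hom}(\L(a)[2a], \L(b)[2b]) = \L$ for $a=b$ and $0$ otherwise gives the asserted free module structure. The projective bundle formula is used to bootstrap this decomposition from the cell filtration as Vishik explained. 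Finally, to match the abstract decomposition with the explicit resolution-of-cell-closure basis, one checks that the map from the free module spanned by the $[\widetilde{C}_k \to X]$ into $\Omega_*(X)$ is surjective (by the induction above) and has source of the same $\L$-rank as the target in each degree, hence is an isomorphism.
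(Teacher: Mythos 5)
Your proposal is correct, and for the hard part --- freeness of $\Omega_*(X)$ as an $\L$-module --- it rests on exactly the same key input as the paper, namely the motivic decomposition of $M(X)$ into Tate motives from \cite[Corollary 2.9]{VY} together with the identification $\Omega^i(X)\cong Hom(\L(d-i)[2(d-i)],M(X))$. You are also right to flag injectivity of $\iota_*$ as the genuine obstacle: the paper explicitly remarks that the algebraic localization sequence, unlike the topological one, is only right exact, so the naive induction cannot be closed without an external input such as \cite{VY}. Where you differ from the paper is in how you establish that the resolutions of cell closures generate. You run an induction along the cell filtration using the right-exact sequence $\Omega_*(X_{i-1})\to\Omega_*(X_i)\to\Omega_*(U_i)\to 0$, homotopy invariance, and the observation that $[\widetilde{C}\to X_i]$ restricts to the fundamental class of the open cell (this uses that $\overline{C}\cap U_i=C$, which holds since $C$ is both dense and closed in $\overline{C}\cap U_i$ --- worth a sentence). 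The paper instead deduces generation from the corresponding fact for $CH^*(X)\cong\Omega^*(X)\otimes_\L\Z$ via \cite[Theorem 1.2.19, Remark 4.5.6]{LM} and a graded Nakayama argument. Both routes are valid; yours is more self-contained and geometric, while the paper's is shorter once part one is in hand. Your concluding step --- a surjection from a free graded $\L$-module onto a free module of the same finite graded rank is an isomorphism --- is the standard fact that a surjective endomorphism of a finitely generated module over a commutative ring is injective, and closes the argument correctly.
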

The second statement of this theorem follows from the first one if we show that the cobordism classes
of resolutions of the cell closures generate $\Omega^*(X)$. This can be deduced from the analogous
statement for the Chow ring using \cite[Theorem 1.2.19, Remark 4.5.6]{LM}.
For complex cobordism of topological complex cellular spaces,
the corresponding theorem simply follows
from an iterated use of the long exact
localization sequence which always splits as
everything in sight has $MU^*$-groups concentrated
in even degrees only.
Note also that in the topological case, the Atiyah-Hirzebruch spectral
sequence degenerates for these spaces, which allows to transport information
from singular cohomology to complex cobordism. As Morel points out,
the analogous motivic spectral sequence invented by Hopkins-Morel (unpublished)
converging to algebraic cobordism does not in general degenerate even for the
point $Spec(k)$, because the one converging to algebraic $K$-theory
does not.

\medskip

We now turn to the ring structure.
First, we note that if $k=\C$, then
there is a map of graded rings
and even of $\L$-algebras
$\Omega^*(X) \to MU^{2*}(X(\C)^{an})$
by universality of algebraic cobordism \cite[Example 1.2.10]{LM}.
Using the geometric description of push-forwards
both for $\Omega^*$ and $MU^*$ and the fact
that the above morphism respects push-forwards
\cite{LM} as well as \cite{LP}, we may describe this
map explicitly by mapping an element
$[Y \to X]$ of $\Omega^*(X)$ to $[Y(\C)^{an} \to X(\C)^{an}]$
in $MU^{2*}(X(\C)^{an})$. As both product structures are defined by taking
cartesian products of the geometric representatives and pulling it
back along the diagonal of $X$ resp. $X(\C)^{an}$, we see that
this map does indeed preserve the graded $\L$-algebra structure. Also,
for any embedding $k \to \C$ we obtain a ring homomorphism
from algebraic cobordism over $k$ to algebraic cobordism
over $\C$.

For the flag variety of $GL_n$, this is an isomorphism
by Theorem 2.6 below which is also valid for $MU^*$,
as both base change from $k$ to $\C$ and complex topological
realization respect products and first Chern classes.
For general cellular varieties, it is still an isomorphism.
This is probably known to the experts, we provide a
proof in the appendix.

For some varieties $X$, the ring structure of $\Omega^*(X)$ can be completely determined using
the projective bundle formula \cite[Section 1.1]{LM}. This is the case for the variety of complete flags
for $G=GL_n$ (see Theorem \ref{Borel} below) and also for Bott-Samelson resolutions of Schubert cycles
in a complete flag variety for any reductive group $G$ (see Section 3).
\medskip

\subsection{Borel presentation for the flag variety of $GL_n$} We now turn to the case of the complete flag variety $X$ for $G=GL_n(k)$. The
points of $X$ are identified with {\em complete flags} in $k^n$. A {\em complete flag} is a strictly
increasing sequence of subspaces
$$F=\{\{0\}=F^0\subset F^1\subset F^2\subset\ldots\subset F^n=k^n\}$$
with $\dim(F^k)=k$. The group $G$ acts transitively on the set of all flags, and the stabilizer of a point
is isomorphic to a Borel subgroup $B\subset G$, which makes $X=G/B$ into a homogeneous space under $G$.
By this definition,  $X$ has structure of an algebraic variety.

Note that over $\C$, one may equivalently define the flag variety $X$ to
be the homogeneous space $K/T$
under the maximal compact subgroup $K\subset G$, where $T$ is a maximal
compact torus in $K$ (that is, the product of several copies of $S^1$) \cite{Bo}.
E. g., for $G=GL_n(\C)$ (resp. $SL_n(\C)$), the maximal compact
subgroup is $U(n)$ (resp. $SU(n)$).
This is the language in which many of the definitions and results
in \cite{BGG}, \cite {Bo} and \cite{BE} are stated.
We sometimes allow ourselves to use those definitions
and results which do carry over
to the ``algebraic" case (reductive groups over $k$)
without mentioning explicitly
the obvious changes that have to be carried out.

There are $n$ natural line bundles $L_1$,\ldots,$L_n$ on $X$, namely, the fiber of $L_i$
at the point $F$ is equal to $F^{i}/F^{i-1}$.
Put $x_i=c_1(L_i)$, where the first Chern class $c_1$ with respect to algebraic
cobordism
is defined in \cite{LM}. Note that our definition of $x_i$ differs by sign
from the one in \cite{Ma}.
The following result on the algebraic cobordism ring
is an analog of the Borel presentation
for the singular cohomology ring of a flag variety.
In fact, it holds for any orientable
cohomology theory since its proof only uses the projective bundle formula.

\begin{thm}\label{Borel} Let $A^*(-)$ be any orientable cohomology theory
(e.g. $CH^*(-)$ or $\Omega^*(-)$).
Then the ring $A^*(X)$ is isomorphic as a graded ring to the ring
of polynomials in $x_1$,\ldots, $x_n$ with coefficients in
the coefficient ring $A^*(pt)$ and $deg(x_i)=1$, quotient by the ideal $S$ generated by the
symmetric polynomials of strictly positive polynomial degree:
$$A^*(X)\simeq A^*(pt)[x_1,\ldots,x_n]/S.$$
More generally, let $E$ be a vector bundle of rank $n$
over a smooth variety $Y$
and $\F(E)$ be the flag variety relative to this bundle.
Then we have an isomorphism of graded rings
$$A^*(\F(E))\simeq A^*(pt)[x_1,...,x_n]/I$$
where $I$ is the ideal generated by the relations
$e_k(x_1,..,x_n)=c_k(E)$ for $1 \leq k \leq n$
with $e_k$ denoting the $k$-th elementary symmetric
polynomial.
\end{thm}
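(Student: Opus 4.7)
My plan is to prove the more general relative statement; the absolute version follows by specializing to $Y = \mathrm{Spec}(k)$ and $E$ the trivial rank-$n$ bundle, so that $c_k(E) = 0$ for $k \ge 1$ and the ideal $I$ reduces to the one generated by the elementary symmetric polynomials of strictly positive degree. For the relative statement I proceed by induction on the rank $n$. The base case $n=1$ is trivial: $\F(E) = Y$, $L_1 = E$, and dividing out by $x_1 - c_1(E)$ recovers $A^*(Y)$.

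For the inductive step I use the forgetful morphism $p : \F(E) \to \P(E^*)$ that sends a flag $F^\bullet$ to the $1$-dimensional quotient $E \twoheadrightarrow E/F^{n-1}$. On $\P(E^*)$ one has the tautological quotient line bundle $\Oc_E(1)$, the tautological hyperplane subbundle $\tau_E$ of rank $n-1$, and the short exact sequence
$$0 \to \tau_E \to p^*E \to \Oc_E(1) \to 0.$$
The fiber of $p$ over $[F^{n-1}]$ is the flag variety of $F^{n-1}$, so $\F(E)$ is canonically isomorphic over $\P(E^*)$ to the relative flag variety $\F(\tau_E)$, and $x_n = c_1(L_n)$ is identified with $c_1(\Oc_E(1))$. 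The projective bundle formula recalled in Section 2 yields
$$A^*(\P(E^*)) = A^*(Y)[x_n]/(R), \qquad R := \sum_{j=0}^{n}(-1)^j c_j(E)\, x_n^{n-j},$$
while the inductive hypothesis applied to $\tau_E$ gives
$$A^*(\F(\tau_E)) = A^*(\P(E^*))[x_1,\ldots,x_{n-1}]/J, \qquad J := \langle e_k(x_1,\ldots,x_{n-1}) - c_k(\tau_E) : 1 \le k \le n-1 \rangle.$$
Assembling the two presentations identifies $A^*(\F(E))$ with $A^*(Y)[x_1,\ldots,x_n]/((R) + J)$.

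The key step is to check $(R) + J = I$, where $I := \langle e_k(x_1,\ldots,x_n) - c_k(E) : 1 \le k \le n \rangle$. The Whitney formula applied to the short exact sequence above gives $c(E) = c(\tau_E)(1+x_n)$, equivalently $c_k(E) = c_k(\tau_E) + x_n\, c_{k-1}(\tau_E)$. For $I \subset (R) + J$, compute modulo $J$:
$$e_k(x_1,\ldots,x_n) = e_k(x_1,\ldots,x_{n-1}) + x_n\, e_{k-1}(x_1,\ldots,x_{n-1}) \equiv c_k(\tau_E) + x_n\, c_{k-1}(\tau_E) = c_k(E).$$
For the reverse inclusion, inverting the Whitney relation as $c_k(\tau_E) = \sum_{j=0}^{k}(-1)^{k-j} c_j(E)\, x_n^{k-j}$ and substituting $c_j(E) \equiv e_j(x_1,\ldots,x_n) \pmod{I}$, a short induction on $k$ yields $c_k(\tau_E) \equiv e_k(x_1,\ldots,x_{n-1}) \pmod{I}$, so $J \subset I$; and modulo $I$, $R$ factors as $\prod_{i=1}^n (x_n - x_i)$, which vanishes because the $i = n$ factor is zero.

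The main obstacle is this bidirectional bookkeeping of ideals, but it is entirely elementary. Conceptually, the proof uses only the projective bundle formula and the Whitney sum formula, both available for any orientable cohomology theory; the formal group law of $A^*$ never intervenes. The same argument therefore works uniformly for Chow groups, $K_0$, and algebraic cobordism.
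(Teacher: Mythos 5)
Your proof is correct. The geometric input is the same as the paper's: both arguments realize the flag variety as the tower of projective bundles obtained by repeatedly passing to the hyperplane (quotient) member of the flag, i.e.\ the ``dualized'' tower $Y \leftarrow \P(E^*) \leftarrow \F(\tau_E)=\F(E)$, which is exactly the paper's chain of partial flag varieties $P_i=\P(\W_{n-i+1}^*)$ over $P_{i-1}$. The difference is in the organization and the bookkeeping. You run the induction in the relative setting, carrying the ideal $I=\langle e_k(x_1,\ldots,x_n)-c_k(E)\rangle$ and verifying $(R)+J=I$ at each step via the Whitney formula (your telescoping identity $\sum_{j}(-1)^{k-j}e_j(x_1,\ldots,x_n)x_n^{k-j}=e_k(x_1,\ldots,x_{n-1})$ does that cleanly, and $R\equiv\prod_i(x_n-x_i)\equiv 0$ handles the top relation). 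This proves the relative statement self-containedly and gets the absolute one by specializing to a trivial bundle, whereas the paper does the absolute case explicitly and refers to Manivel's Proposition 3.8.1 for the relative one. Conversely, the paper's unrolled computation produces the explicit generators $h_{n-i+1}(x_{n-i+1},\ldots,x_n)$ of $S$ (complete homogeneous symmetric polynomials in the trailing variables), identified with $S$ at the end via a recurrence; these explicit relations are reused later (Remark \ref{r.Borel}) to exhibit the class of a point as the monomial $x_n^{n-1}x_{n-1}^{n-2}\cdots x_2$, which your presentation does not immediately yield. Both proofs use only the projective bundle and Whitney sum formulas, so both apply verbatim to any oriented theory, as you note.
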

\begin{proof}
The proof of \cite[Theorem 3.6.15]{Ma}
for the Chow ring case can be slightly modified so that it becomes applicable to any other
orientable theory $A^*$. Namely, for an arbitrary oriented cohomology theory $A^*$,
it is more convenient to dualize the geometric argument in \cite[Theorem 3.6.15]{Ma}
because we can no longer use that $c_i(E)=(-1)^ic_i(E^*)$ for a vector bundle $E$
(which is used implicitly several times in the proof of \cite[Theorem 3.6.15]{Ma}).
That is, we start with the variety of partial flags
$P_i=\{\ F^{n-i} \subset F^{n-i+1}\subset\ldots\subset F^n=k^n \}$
(e. g. $P_1$ is the variety of hyperplanes in $k^n$
and $P_{n-1}=X$). The rest of the argument is completely analogous to the proof of
\cite[Theorem 3.6.15]{Ma}. We give the details below for the reader's convenience.

Denote by $\W_j$ the corresponding
tautological vector bundle of rank $j$ over $P_i$, where $j \ge n-i$ (that is, the fiber of $\W_j$
over a point $\{\ F^{n-i} \subset F^{n-i+1}\subset\ldots\subset F^n \}$ is equal to $F_{j}$).
In particular, $L_i$ defined above is equal to $\W_i/\W_{i-1}$.
Put $x_i=c_1(L_i)$.
As $P_i =\P((\W_{n-i+1})^*)$ is the projective bundle over $P_{i-1}$
and the line bundle $\Oc_{\W_{n-i+1}}(1)$ is isomorphic to
$L_{n-i+1}$, the projective bundle formula for
orientable cohomology theories \cite[Section 1.1]{LM} yields
$$A^*(P_i) \cong A^*(P_{i-1})[x_{n-i+1}]/(\sum_{j=0}^{n-i+1}(-1)^jc_j(\W_{n-i+1})
x_{n-i+1}^{n-i+1-j}).$$
Or, using the above interpretation of the relation in the projective bundle formula
and the short exact sequence of
vector bundles on $P_i$
$$ 0 \to \W_{n-i} \to \W_{n-i+1} \to  L_{n-i+1} \to 0$$
we get
$$A^*(P_i) \cong A^*(P_{i-1})[x_{n-i+1}]/(c_{n-i+1}(\W_{n-i})).$$

It remains to compute $c_{n-i+1}(\W_{n-i})$. This can be done by induction on $i$
starting from $i=0$ (in which case $\W_n$ is a trivial line bundle) and applying
the Whitney sum formula to the short exact sequence of vector bundles above.
We get $c(\W_{n-i})=\prod_{j=n-i+1}^nc(L_j)^{-1}=\prod_{j=n-i+1}^n(1+x_j)^{-1}=
\sum_{k\ge 0}(-1)^kh_k(x_{n-i+1},\ldots,x_n)$, where $h_k(x_{n-i+1},\ldots,x_n)$ denotes the sum of all monomials
of degree $k$ in $x_{n-i+1}$,\ldots,$x_n$. In particular,
$c_{n-i+1}(\W_{n-i})=(-1)^{n-i+1}h_{n-i+1}(x_{n-i+1},\ldots,x_n)$.
From this we deduce that
$$A^*(P_i) \cong A^*(P_{i-1})[x_{n-i+1}]/(h_{n-i+1}(x_{n-i+1},\ldots,x_n)),$$
and hence $$A^*(P_{n})\cong A^*(pt)[x_1,\ldots,x_n]/(h_n(x_n),h_{n-1}(x_{n-1},x_{n}),
\ldots,h_1(x_1,\ldots,x_n)).$$
The ideal generated by the relations $h_{n-i+1}(x_{n-i+1},\ldots,x_n)$ is exactly $S$,
which is easy to check starting with the recurrence relation
$$h_{i}(x_1,\ldots,x_n)=h_{i}(x_i,\ldots,x_n)+\sum_{j<i}x_jh_{i-1}(x_j,\ldots,x_n).$$

The proof of the more general case is completely analogous to the proof
of \cite[Proposition 3.8.1]{Ma}. Note that the proof of \cite[Proposition 3.8.1]{Ma}
does not really use \cite[Theorem 3.6.15]{Ma} as an induction base
(despite the claim in the proof) and in fact gives another proof for
\cite[Theorem 3.6.15]{Ma}, which is also applicable to an arbitrary oriented
cohomology theory.
\end{proof}
\begin{remark}\label{r.Borel} \em The proof immediately implies that the class of a point in $A^*(X)$ is
equal to $x_n^{n-1}x_{n-1}^{n-2}\cdots x_2$. Since
$x_n^{n-1}x_{n-1}^{n-2}\cdots x_2=
\frac1{n!}\prod_{i>j}(x_i-x_j)\mod S$
(which is easy to show by induction on $n$ using that
$(x_n-x_{n-1})\cdots(x_n-x_1)=nx_n^{n-1}\mod S$) we also have that the class of a point can be represented
by the polynomial $\Delta_n=\frac1{n!}\prod_{i>j}(x_i-x_j)$.

Note that the proof also gives an explicit formula for the classes of one-dimensional
{\em Schubert cycles} $X_1=X_{s_{\g_1}}, \ldots, X_{n-1}=X_{s_{\g_{n-1}}}$
in $X$
corresponding to the simple roots $\g_1$,\ldots,$\g_{n-1}$ of $GL_n$
(see the beginning of Section 3 for the definition of the Schubert cycles $X_w$ for $w$
in the Weyl group of $G$).
The cycle $X_k$ consists of flags
$F=\{\{0\}=F^0\subset F^1\subset F^2\subset\ldots\subset F^n=k^n\}$
such that all $F^i$ except for $F^k$ are fixed. Then the class of $X_k$ is equal to the
class of a point divided by $x_{k+1}$. Indeed, to get the class of $X_k\subset X$
we should take the point in $P_{n-k-1}$ corresponding to the fixed partial flag
$\{F^{k+1}\subset F^{k+2}\subset\ldots\subset F^n=k^n\}$ and then take a line
in a fiber of the projective bundle $P_{n-k}\to P_{n-k-1}$ over this point. Namely, the
line will consist of all hyperplanes in $F^{k+1}$ that contain the fixed codimension
two subspace $F^{k-1}$.
Again it is easy
to show by induction on $n$ that the polynomial
$x_n^{n-1}x_{n-1}^{n-2}\cdots x_2/x_k$ is equal to
$2\Delta_n/(x_{k+1}-x_{k})$ modulo the ideal $S$.
\end{remark}

Note that the Borel presentation for singular cohomology implies,
in particular, that Picard group of the
flag variety is generated (as an abelian group)
by the first Chern classes of the line bundles $L_1$,\ldots,$L_n$
the only nontrivial relation being
$\sum c_1(L_i)=0$.
In what
follows, we will also use the following alternative description of the Picard group of $X$. Recall that
each strictly dominant weight $\l$ of $G$ defines an irreducible representation
$\pi_\l:G\to GL(V_\l)$ and an embedding $G/B\to \P(V_\l)$. Hence, to each
strictly dominant
weight $\l$ of $G$ we can assign a very ample line bundle $L(\l)$ on $X$ by taking the pull-back
of the  line bundle $\Oc_{\P(V_\l)}(1)$ on $\P(V_\l)$. The map $\l\mapsto L(\l)$
extended to non-dominant weights by linearity gives an isomorphism
between the Picard group of $X$ and the weight lattice of $G$ \cite[1.4.3]{Brion}.
In particular, for the line bundles above we have $L_i=L(-e_i)$ where
$e_i$ is the weight of $GL_n$
given by the $i$-th entry of the diagonal torus in $GL_n$.

We now compute $c_1(L(\a_i))$ as a polynomial in $x_1$,\ldots, $x_n$.
Let $\g_1$,\ldots,$\g_{n-1}$ be the simple roots of $G$ (that is, $\g_i=e_i-e_{i+1}$).
We can express the line bundles $L(\g_i)$ in terms of the line bundles $L_1$,\ldots,$L_n$.
Since $L_i=L(-e_i)$ and $\g_i=e_i-e_{i+1}$, we have that the line bundle $L(\g_i)$ is isomorphic to
$L_i^{-1}\otimes L_{i+1}$.
In particular,  we can compute
$$c_1(L(\g_i))=c_1(L_i^{-1}\otimes L_{i+1})=F(\chi(x_i),x_{i+1}).$$
E.g. by the formulas for $F(x,y)$ and $\chi(x)$ from \cite[2.5]{LM}
the first few terms of
$c_1(L(\g_i))$ look as follows
$$c_1(L(\g_i))=-x_i+x_{i+1}+a_{11}x_i^2-a_{11}x_ix_{i+1}+\ldots,$$
where $a_{11}=-[\P^1]$.

In what follows, we will use the isomorphism $W\cong S^n$. The simple reflection
$s_\a$ for any root $\a=e_i-e_j$ acts on the
weight lattice (spanned by the weights $e_1$,\ldots,$e_n$, which form an orthonormal
basis) by the reflection in the plane perpendicular to $e_i-e_j$ and hence permutes
the weights $e_1$,\ldots, $e_n$ by the transposition $(i~j)$.

\section{Schubert calculus for algebraic cobordism of flag varieties}
\label{s.main}
In this section, we assume that $G$ is an arbitrary connected split reductive group
unless we explicitly mention that $G=GL_n(k)$, and $X=G/B$ is the complete flag variety for $G$.
We now investigate the ring structure of $\Omega^*(X)$ in more geometric
terms.

\subsection{Schubert cycles and Bott-Samelson resolutions}
Recall that the flag variety $X$ is cellular with the following cellular
decomposition into {\em Bruhat cells}.
Let us fix a Borel subgroup $B$.
For each element $w\in W$ of the Weyl group of $G$, define the {\em Bruhat (or Schubert)
cell} $C_w$ as the $B$--orbit of
the the point $wB\in G/B=X$ (we identify the Weyl group with $N(T)/T$ for a maximal
torus $T$ of $G$ inside $B$).
The {\em Schubert cycle} $X_w$ is defined as the closure of $C_w$ in $X$.
The dimension of $X_w$ is equal to the length of $w$ \cite{BGG}. Recall that the
length of an element $w\in W$ is defined as the minimal number of factors in a
decomposition of $w$ into the product of simple reflections.
Recall also that for each $l$-tuple $I=(\a_1,\ldots,\a_l)$
of simple roots of $G$, one can define the {\em Bott-Samelson resolution}
$R_I$ (which has dimension $l$) together with the map $r_I:R_I\to X$. Bott-Samelson
resolutions are smooth. Consequently, for any $I$  the map
$r_I:R_I \to X$ represents an element in $\Omega^*(X)$ which we denote by $Z_I$.

Denote by $s_{\a}\in W$ the reflection corresponding to a root $\a$, and by
$s_I$ the product $s_{\a_1}\cdots s_{\a_l}$. If the decomposition
$s_I=s_{\a_1}\cdots s_{\a_l}$ defined by
$I$ is reduced (that is, $s_I$ can not be written as a product of less than
$l$ simple reflections, or equivalently, the length of $s_I$ is equal to $l$),
then the image $r_I(R_I)$ coincides with the Schubert cycle
$X_{s_I}$ (which we will also denote by $X_I$).
The dimension of $X_I$ in this case is also equal to $l$ and the map
$r_I:R_I\to X_{I}$ is a
birational isomorphism. In this case, the variety $R_I$ is
a resolution of singularities for the Schubert cycle $X_{I}$.

Bott-Samelson resolutions were introduced by Bott and Samelson in the case of compact Lie groups,
and by Demazure in the case of algebraic semisimple groups \cite{De}.  There are
several equivalent definitions, see e. g. \cite{BK,De,Ma}.
We will use the definition below (which follows easily from \cite[2.2]{BK}), since it
is most suited to our needs.
Namely, $R_I$ is defined by the following inductive procedure starting from
$R_{\emptyset}=pt=Spec(k)$ (in what follows we will rather denote $R_{\emptyset}$ by
$R_e$). For each $j$-tuple $J=(\a_1,\ldots,\a_j)$ with $j < l$, denote by
$J\cup \{j+1\}$ the $(j+1)$-tuple $(\a_1,\ldots,\a_j,\a_{j+1})$.
Define $R_{J\cup \{j+1\}}$ as  the fiber product $R_J\times_{G/P_{j+1}}G/B $, where $P_{j+1}$ is the minimal
parabolic subgroup corresponding to the root $\a_{j+1}$. Then
the map  $r_{J\cup\{j+1\}}:R_{J\cup\{j+1\}}\to X$ is defined as the projection to
the second factor.
In what follows, we will use that $R_J$ can be
embedded into $R_{J\cup \{j+1\}}$ by sending $x\in R_J$ to
$(x,r_J(x))\in R_J\times_{G/P_{j+1}}G/B$.

In particular, one-dimensional Bott-Samelson resolutions are isomorphic to the
corresponding Schubert cycles. It is easy to show that any two-dimensional
Bott-Samelson resolution $R_I$ for a reduced $I$ is also isomorphic
to the corresponding Schubert cycle. More generally, $R_I$ is isomorphic
to $X_I$ if and only if all simple roots in $I$ are pairwise distinct (in particular,
the length of $I$ should not exceed the rank of $G$). The simplest example where
$R_I$ and $X_I$ are not isomorphic for a reduced $I$ is $G=GL_3$ and
$I=(\g_1,\g_2,\g_1)$ (where $\g_1$, $\g_2$ are two simple roots of $GL_3$).

It is easy to show that $R_{J\cup\{j+1\}}$ is the projectivization
of the bundle $r_{J}^*\pi_{j+1}^*E$, where $E$ is the rank two vector bundle on
$G/P_{j+1}$ defined in the next subsection and $\pi_{j+1}:G/B\to G/P_{j+1}$ is the
natural projection. This is the definition used in \cite{BE}. In the topological
setting, the vector bundle $r_{J}^*\pi_{j+1}^*E$ splits into the sum of two line
bundles \cite{BE} but in in the algebro-geometric setting this is no longer true
(though $r_{J}^*\pi_{j+1}^*E$ still contains a line subbundle as follows from
the proof of Lemma \ref{l.Weyl}).

This definition of $R_I$ allows to describe easily (by repeated use of the projective bundle
formula) the ring structure of the cobordism ring $\Omega^*(R_I)$. It also implies that $R_I$
is cellular with $2^l$ cells labeled by all subindices $J\subset I$.

The cobordism classes $Z_I$ of Bott-Samelson resolutions generate
$\Omega^*(X)$ but do not form a basis.
The following proposition is an immediate corollary of Theorem \ref{t.cellular}.
An analogous statement for complex cobordism is proved in \cite[Proposition 1]{BE}
by using the Atiyah-Hirzebruch spectral sequence (as mentioned in Section 2).
\begin{prop}\label{p.BScell} As an $\L$-module, the algebraic cobordism ring
$\Omega^*(X)$ of the flag variety is freely generated by the Bott-Samelson
classes $Z_{I(w)}$ where $w\in W$ and $I(w)$ defines a reduced decomposition
for $w$ (we choose exactly one $I(w)$ for each $w$).
\end{prop}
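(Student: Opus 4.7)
The plan is to reduce the claim directly to Theorem \ref{t.cellular}, so the argument is essentially a matter of matching up the cellular data of $X=G/B$ with the data going into that theorem. First I would recall the Bruhat decomposition: for any Borel $B$ in a split reductive $G$, the Schubert cells $C_w$ ($w \in W$) give a cellular decomposition of $X$ in the sense of the definition in Section 2, with $C_w \cong \A^{\ell(w)}$ and each $C_w$ open in its closure, the Schubert cycle $X_w$. One can exhibit a filtration by closed subvarieties $X_{\le i} := \bigcup_{\ell(w)\le i} X_w$ so that $X_{\le i} \smallsetminus X_{\le i-1}$ is a disjoint union of affine cells of fixed codimension; this is standard, and puts $X$ into the framework of Theorem \ref{t.cellular}.

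Next I would identify the chosen cell-resolutions with Bott-Samelson resolutions. For each $w \in W$, fix a reduced expression and let $I(w) = (\a_1,\ldots,\a_{\ell(w)})$ be the corresponding tuple of simple roots. By the discussion preceding the proposition, the map $r_{I(w)}\colon R_{I(w)} \to X$ has image $X_{s_{I(w)}} = X_w$, is a birational isomorphism onto that image, and $R_{I(w)}$ is smooth. Thus $r_{I(w)}\colon R_{I(w)} \to X_w$ is a resolution of singularities of the cell closure $X_w$, which is exactly the type of class Theorem \ref{t.cellular} uses as a generator for the summand corresponding to the cell $C_w$.

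With these identifications in place, the conclusion is immediate from Theorem \ref{t.cellular}: the classes $Z_{I(w)} = [r_{I(w)}\colon R_{I(w)} \to X]$, as $w$ ranges over $W$, give an $\L$-basis of $\Omega^*(X)$, with the class $Z_{I(w)}$ sitting in degree $\dim X - \ell(w)$ (matching the codimension of the cell). The only point that might deserve an explicit remark is that the freeness and basis property of Theorem \ref{t.cellular} does not depend on \emph{which} resolution of the cell closure is chosen, so replacing any other smooth birational resolution appearing in the proof of that theorem by $R_{I(w)}$ preserves the basis property; this follows because two different choices of resolution of $\overline{C_w}$ differ, modulo lower-dimensional cells, by a unit in $\L$ (in fact by the class of a point, since both resolutions are birational and each cell class is defined up to the ambiguity coming from resolutions of lower-dimensional cell closures). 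The main obstacle is therefore purely bookkeeping, namely making precise the triangular change-of-basis from a canonical set of cell-closure resolutions to the Bott-Samelson resolutions $R_{I(w)}$, and this can be handled by induction on length $\ell(w)$.
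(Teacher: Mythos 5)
Your proof is correct and follows the same route as the paper, which presents Proposition \ref{p.BScell} as an immediate corollary of Theorem \ref{t.cellular}: the Bruhat cells make $G/B$ cellular, and each $R_{I(w)}$ with $I(w)$ reduced is a smooth birational resolution of the cell closure $X_w$, hence one of the generators that theorem provides. Your closing concern about independence of the choice of resolution is already absorbed into the statement of Theorem \ref{t.cellular} (``choose one resolution for each cell''), so the triangular change-of-basis argument you sketch, while valid, is not needed.
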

There is no canonical choice for a decomposition $I(w)$ of a given element $w$ in the Weyl group.
From the geometric viewpoint it is more natural to consider all Bott-Samelson classes
at once (including those for non-reduced $I$) even though they are not linearly
independent over $\L$. So throughout the rest of the paper we will not put any restrictions on
the multiindex $I$.

\subsection{Schubert calculus} \label{ss.operators}
We will now describe the cobordism classes $Z_I$ as polynomials in the first Chern
classes of line bundles on $X$. This allows us to compute products
of Bott-Samelson resolutions and hence achieves the goal
of a Schubert calculus for algebraic cobordism.

We first define operators $A_i$  on $\Omega^*(X)$ following the approach
of the previous section (see Definition \ref{d.operator}).
These operators generalize the {\em divided difference operators} on the Chow ring
$CH^*(X)$ defined in
\cite{BGG,De,Che}  to algebraic cobordism.

We first define operators $A_i$ for $GL_n$ since in this case the Borel presentation
allows to make them more explicit.
We start with
the subgroup $B$ of upper triangular matrices and the diagonal torus,
which yields an isomorphism $W \cong S_n$. Under this isomorphism,
the reflection $s_{\a}$ with respect to a root $\a=e_i-e_j$ goes to the
transposition $(i~j)$ (see the end of Section 2).
For each positive root $\a$ of $G$, we define the operators $\sigma_\a$
and $\hat A_\a$ on the ring of formal power series $\L[[x_1,\ldots,x_n]]$ as follows:
$$(\sigma_\a f)(x_1,\ldots,x_n)=f(x_{s_\a(1)},\ldots,x_{s_\a(n)}),$$
$$\hat A_\a=(1+\sigma_\a)\frac1{F(x_{i+1},\chi(x_i))}.$$
It is easy to check that $\hat A_\a$ is
well-defined
on the whole ring $\L[[x_1,\ldots,x_n]]$ (see Section 5).
Note also that under the homomorphism
$\L[[x_1,\ldots,x_n]]\to \L[x_1,\ldots,x_n]/S\cong \Omega^*(X)$
the power series $F(x_{i+1},\chi(x_i))$ maps to $c_1(L(\g_i))$
(see the end of Section 2), so our definition for additive formal group law
reduces to the definition of divided difference operator on the polynomial ring
$\Z[x_1,\ldots,x_n]$ (see \cite[2.3.1]{Ma}).
Finally, we define the operator $A_\a:\Omega^*(X)\to \Omega^*(X)$ using the
Borel presentation by the formula
$$A_\a(f(x_1,\ldots,x_n))=\hat A_\a(f)(x_1,\ldots,x_n)$$
for each polynomial $f\in\L[x_1,\ldots,x_n]$. Again, by degree reasons the right hand
side is a polynomial. The operator $A_\a$ is well defined (that is, does not depend
on a choice of a polynomial $f$ representing a given class in $\L[x_1,\ldots,x_n]/S$)
since for any polynomial $h$ and any symmetric polynomial $g$ we have
$\hat A_\a(gh)=g\hat A_\a(h)$.

We now define $A_i=A_{\alpha_i}$ for an arbitrary reductive group $G$ and a simple root $\a_i$.
Denote by $P_i\subset G$ the minimal parabolic subgroup corresponding to the
root $\a_i$. Then $X=G/B$ is a projective line fibration over $G/P_i$. Indeed, consider the
projection $\pi_i:G/B\to G/P_i$. Take the line bundle $L(\rho)$ on $G/B$ corresponding to the
weight $\rho$, where $\rho$ is the half-sum of all positive roots or equivalently the sum of all
fundamental weights of $G$ (the weight $\rho$ is uniquely characterized by the property that
$(\rho,\a)=1$ for all simple roots $\a$). Then it is easy to check that the vector bundle
$E:={\pi_i}_*L(\rho)$ on $G/P_i$ has rank two and $G/B=\P(E)$.
Note that tensoring $E$ with any line bundle $L$ on $G/P_i$ does not change $\P(E)=\P(E\otimes L)$ so
the property $\P(E)=X$ does not uniquely define the bundle $E$. However,  the choice
$E={\pi_i}_*L(\rho)$ (suggested to us by Michel Brion) is the only uniform choice for all $i$,
since $L(\rho)$ is the only line bundle on $X$ with the property $\P({\pi_i}_*L(\rho))=X$ for all $i$.
We now use Definition \ref{d.operator} to define an $\Omega^*(G/P_i)$-linear operator
$A_i:=A_{\pi_i}$
on $\Omega^*(X)$. For $G=GL_n$, this definition coincides with the one given above.
This is easy to show using that $G/P_i$ for $\a_i=\g_i$ is the partial flag variety
whose points are flags $F=\{\{0\}=F^0\subset\ldots\subset
F^{i-1}\subset F^{i+1}\subset \ldots\subset F^n=k^n\}$.

Let $I=(\a_1,\ldots,\a_l)$ be an $l$-tuple of
simple roots of $G$. Define the element $\mathcal R_I$ in
$\Omega^*(X)$ by the formula

$$\R_{I}:=A_{l}\ldots A_{1}Z_e.$$

In the case $G=GL_n$, we can also regard $\R_I$ as a polynomial in
$\L[x_1,\ldots,x_n]/S$.
Similar to \cite[Theorem 3.15]{BGG} or \cite[page 807]{BE},
one may describe $Z_e$ for general $G$ using the formula
$$Z_e=\R_e:=\frac1{|W|}\prod_{\a\in R^+}c_1(L(\a)),$$
where $R^+$ denotes the set of positive roots of $G$
(recall that $|R^+|=\dim~X=:d$). As in the Chow ring case,
there is also the formula
$$Z_e=\frac{1}{d!} L(\rho)^d.$$
Both formulas immediately follow from the analogous formulas for
the Chow ring \cite[Theorem 3.15, Corollary 3.16]{BGG} since
$\Omega^d(X)\simeq CH^d(X)$ (as follows from
\cite[Theorem 1.2.19, Remark 4.5.6]{LM}).

Note that for $GL_n$ the formula for $\R_e$ reduces to $\R_e=\Delta_n$
since $c_1(L(e_i-e_j))=x_j-x_i+$higher order terms, and hence the equality $Z_e=\R_e$
follows from
Remark \ref{r.Borel}. In particular, by the same remark $\R_e$ modulo $S$ has a
denominator-free expression $x_n^{n-1}x_{n-1}^{n-2}\cdots x_2$.

We now prove an algebro-geometric version of \cite[Corollary 1, Proposition 3]{BE} using
our algebraic operators $A_i$.
\begin{thm}\label{main} The cobordism class $Z_I=[r_I:R_I\to X]$ of the
Bott-Samelson resolution $R_I$ is equal to $\R_I$.
\end{thm}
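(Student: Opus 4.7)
The plan is to prove the statement by induction on the length $l$ of $I=(\a_1,\ldots,\a_l)$. The base case $l=0$ is immediate: $R_e$ is a point, and by the definitions given in Section~\ref{ss.operators} both $Z_e$ and $\R_e$ denote its class in $\Omega^*(X)$.

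For the inductive step, set $I'=(\a_1,\ldots,\a_{l-1})$ and assume $Z_{I'}=\R_{I'}$. The fiber-product description $R_I=R_{I'}\times_{G/P_l}G/B$ recalled in Section~\ref{s.main} produces a cartesian square
$$\xymatrix{R_I \ar[r]^{r_I} \ar[d]_{q} & G/B \ar[d]^{\pi_l} \\ R_{I'} \ar[r]_{\pi_l\circ r_{I'}} & G/P_l}$$
where $q$ is the first projection and $r_I$ is the second. Since $\pi_l$ is a smooth projective line bundle, so is $q$; in particular $R_I$ is smooth, the square is transverse, and $1_{R_I}=q^*1_{R_{I'}}$. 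Transverse base change for push-forwards in algebraic cobordism (see \cite{LM}) then gives
$$Z_I=(r_I)_*1_{R_I}=(r_I)_*q^*1_{R_{I'}}=\pi_l^*(\pi_l\circ r_{I'})_*1_{R_{I'}}=\pi_l^*(\pi_l)_*Z_{I'}.$$
By Corollary~\ref{c.Gysin} applied to the fibration $\pi_l$, the composition $\pi_l^*(\pi_l)_*$ equals the operator $A_{\pi_l}$, which is by definition $A_l$. Combined with the inductive hypothesis this gives $Z_I=A_lZ_{I'}=A_l\cdots A_1Z_e=\R_I$.

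The step I expect to be the most delicate is the base-change identity above: one has to invoke that Levine--Morel push-forward and pullback really commute in the cartesian square formed by a smooth projective line fibration and an arbitrary projective morphism into its base. The smoothness of $\pi_l$ makes the square transverse, so this is part of the standard functoriality of $\Omega^*$ in \cite{LM} rather than something new, but unlike the Chow case one cannot circumvent it by a simple dimension count, so it must be quoted carefully. Once granted, the whole argument reduces to the already-established identity $\pi_l^*(\pi_l)_*=A_l$ of Corollary~\ref{c.Gysin}, which is the heart of the Demazure-style approach extended here to algebraic cobordism.
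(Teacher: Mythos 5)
Your proof is correct and follows essentially the same route as the paper: induction on the length of $I$, with the inductive step reduced via transverse base change to the identity $\pi_l^*(\pi_l)_*=A_l$ of Corollary~\ref{c.Gysin}. The only cosmetic difference is that you apply base change directly to the square defining $R_I=R_{I'}\times_{G/P_l}G/B$, whereas the paper works with the square $G/B\times_{G/P_i}G/B\to G/P_i$ and cites \cite{BE} and \cite{Ma} for the step you spell out explicitly.
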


\begin{proof} The essential part of the proof is the formula for the push-forward as
stated in Corollary \ref{c.Gysin}.
Once this formula is established it is not hard to show that
$A_iZ_I=Z_{I\cup\{i\}}$ for all $I$ by exactly the same methods as in the Chow ring
case \cite{Ma} and in the complex cobordism case \cite{BE}. Namely,
we have the following
cartesian square
$$\begin{CD}
G/B\times_{G/P_i}G/B @>p_2>> G/B\\
@V p_1 VV @VV \pi_i V\\
G/B @>\pi_i>> G/P_i.
\end{CD}.$$
E.g., if $G=GL_n$ we get exactly the diagram
of \cite[proof of Lemma 3.6.20]{Ma}.
Using this commutative diagram and the definition
of Bott-Samelson resolutions it is easy to show that
${\pi_i}^*{\pi_i}_*Z_I=Z_{I\cup\{i\}}$ \cite[proof of Proposition 2.1]{BE}.
We now apply Corollary \ref{c.Gysin} and get that
$A_i={\pi_i}^*{\pi_i}_*$. It follows by induction on the length of $I$
that $Z_I=A_l\ldots A_1 Z_e$.
\end{proof}

\begin{remark} \em Note that if we apply the base change formula
\cite[Definition 1.1.2 (A2)]{LM}
to the cartesian diagram from the proof
of Theorem \ref{main}, we get ${p_1}_*p_2^*=\pi_i^*{\pi_i}_*$, where the
right hand side is precisely the
definition of the ``geometric'' operator denoted $A_i$ in \cite{BE}, while the
left hand side is the operator denoted $\delta_i$ in \cite[proof of Theorem 3.6.18]{Ma}.
Hence Manivel and Bressler--Evens consider the same operators.
\end{remark}

We now compute the action of the operator $A_i$ on polynomials in the first
Chern classes (this computation will be used in Sections 4 and 5).
Consider the operator
$\sigma_i:=\sigma_{\pi_i}$ again defined as in Definition
\ref{d.operator}. Note that $\sigma_i$ corresponds to the simple reflection
$s_i:=s_{\a_i}$ in the following sense.

\begin{lemma} \label{l.Weyl} For any line bundle $L(\l)$ on $X$, we have
$$\sigma_{i}(c_1(L(\l)))=c_1(L(s_{i}\l)).$$
\end{lemma}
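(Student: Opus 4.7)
The plan is to reduce the statement to two special cases via the formal group law: the case of $s_i$-fixed weights (where $\sigma_i$ acts trivially on both sides) and the case $\l = \rho$ (where $\xi$ can be identified explicitly with $c_1(L(\rho))$), and then to combine them.

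For $\l = \rho$: I would identify $\xi = c_1(\mathcal{O}_E(1))$ with $c_1(L(\rho))$ as follows. The adjunction counit $\pi_i^*E = \pi_i^*(\pi_i)_*L(\rho) \to L(\rho)$ is surjective because $L(\rho)$ is globally generated along the $\P^1$-fibers of $\pi_i$; by the universal property of $\P(E) = G/B$ the corresponding morphism $G/B \to \P(E)$ is the identity, so the tautological quotient $\mathcal{O}_E(1)$ coincides with $L(\rho)$. The kernel of $\pi_i^*E \to L(\rho)$ is then a $G$-equivariant line bundle on $G/B$, and by comparing $T$-weights at the $T$-fixed points of $G/B$ one identifies it with $L(s_i\rho)$ (the two weights of the two-dimensional $P_i$-representation $H^0(P_i/B, L(\rho))$ are $-\rho$ and $-s_i\rho$). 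Hence the Chern roots of $\pi_i^*E$ in $\Omega^*(G/B)$ are $c_1(L(\rho))$ and $c_1(L(s_i\rho))$, so that $\pi_i^*c_1(E) - \xi = c_1(L(s_i\rho))$. Since $\sigma_i$ sends $\xi$ to $\pi_i^*c_1(E) - \xi$ by Definition \ref{d.operator}, this yields $\sigma_i(c_1(L(\rho))) = c_1(L(s_i\rho))$.

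For a general weight $\l$, set $n = \langle \l, \a_i^\vee\rangle$ and $\mu = \l - n\rho$; then $\langle\mu, \a_i^\vee\rangle = 0$, so $\mu$ extends to a character of the minimal parabolic $P_i$ and $L(\mu) = \pi_i^*L'(\mu)$ is pulled back from $G/P_i$. Consequently $c_1(L(\mu))$ lies in $\pi_i^*\Omega^*(G/P_i)$ and is fixed by $\sigma_i$. Using $L(\l) = L(\mu)\otimes L(\rho)^{\otimes n}$ and iterating the formal group law,
$$c_1(L(\l)) = F\bigl(c_1(L(\mu)),\, [n]_F\xi\bigr),$$
where $[n]_F$ denotes the $n$-fold iterated formal sum. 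Since $\sigma_i$ is a ring homomorphism that fixes $c_1(L(\mu))$ and sends $\xi$ to $c_1(L(s_i\rho))$, applying it and using the identity $s_i\l = \mu + n s_i\rho$ gives
$$\sigma_i(c_1(L(\l))) = F\bigl(c_1(L(\mu)),\, [n]_F c_1(L(s_i\rho))\bigr) = c_1\bigl(L(\mu)\otimes L(s_i\rho)^{\otimes n}\bigr) = c_1(L(s_i\l)),$$
which is the desired equality.

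The main obstacle will be the identification of the kernel of $\pi_i^*E \to L(\rho)$ with $L(s_i\rho)$; the cleanest route is a $T$-equivariant argument, using that both line bundles carry the same $T$-character at every $T$-fixed point $wB$ (namely $-w s_i\rho$) and hence agree as $T$-equivariant line bundles on $G/B$. Once this identification is in place, the rest is routine formal-group-law manipulation together with the natural decomposition $\l = \mu + n\rho$ into an $s_i$-fixed part and a multiple of $\rho$.
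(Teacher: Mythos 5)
Your argument is correct, and it is a genuine variant of the one in the paper rather than a reproduction of it. The paper works with the same exact sequence $0\to\tau_E\to\pi_i^*E\to\Oc_E(1)\to 0$, but it never identifies the two Chern roots individually: it only identifies the quotient $\tau_E^{-1}\otimes\Oc_E(1)$ with the relative tangent bundle of $\pi_i$, i.e.\ with $L(\a_i)$, deduces $\sigma_i(c_1(L(\a_i)))=c_1(L(-\a_i))$, and combines this with the $\sigma_i$-invariance of $c_1(L(\mu))$ for $\mu\perp\a_i$ (the identifications $\Oc_E(1)=L(\rho)$ and $\tau_E=L(\rho-\a_i)=L(s_i\rho)$ that you prove appear there only as a parenthetical remark). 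You instead pin down both roots, via the adjunction counit and a comparison of $T$-weights at fixed points, and anchor the computation at $c_1(L(\rho))=\xi$. What your route buys is a cleaner final step: the decomposition $\l=\mu+n\rho$ with $\mu\perp\a_i$ always exists in the weight lattice, whereas the paper's closing sentence (``these two identities imply the statement'') tacitly requires expressing $\l$ in terms of $\a_i$ and an $s_i$-fixed weight, which is not literally possible integrally (e.g.\ a fundamental weight of $SL_2$ is not an integer multiple of $\a$ plus a perpendicular weight), so one has to argue slightly more indirectly there; your version avoids this. The only points to make sure you spell out are that $\sigma_i$ is a ring homomorphism fixing $\pi_i^*\Omega^*(G/P_i)$ (immediate from Definition \ref{d.operator}, since the swap $y_1\leftrightarrow y_2$ is an $\Omega^*(G/P_i)$-algebra automorphism preserving the ideal), and that a $G$-equivariant line bundle on $G/B$ is determined by its fiber weight, which legitimizes the fixed-point comparison identifying the kernel with $L(s_i\rho)$.
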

\begin{proof}
Since $X=\P(E)$ (recall that $E={\pi_i}_*L(\rho)$), the bundle $\pi_i^*E$
on $X$ admits the usual short exact sequence
$$0\to \tau_E\to \pi_i^*E\to \Oc_E(1)\to 0,$$
where $\tau_E$ is the tautological line bundle on $X$ (that is, the fiber of $\tau_E$ at the
point $x\in X=\P(E)$ is the line in $E$ represented by $x$). Note that in our case $\P(E)=\P(E^{dual})$
since $E$ is of rank two (thus hyperplanes in $E$ are the same as lines in $E$).
It is easy to show that there is an isomorphism of line bundles $$\tau_E^{-1}\otimes\Oc_E(1)=L(\a_i).$$
(Moreover, one can show that $\tau_E=L(\rho-\a_i)$ and $\Oc_E(1)=L(\rho)$.)
Indeed, $\tau_E^{-1}\otimes\Oc_E(1)\simeq Hom(\tau_E,\Oc_E(1))$ can be thought of as the
bundle of tangents along the fibers of $\pi_i$. The latter is the line bundle associated with the
$B$--module ${\mathfrak p_i}/{\mathfrak b}$, which has weight $-\a_i$ (see \cite[Remark 1.4.2]{Brion}
for an alternative definition of the line bundles $L(\l)$ in terms of the one-dimensional $B$--modules).
Here ${\mathfrak p_i}$ and ${\mathfrak b}$ denote the Lie algebras of $P_i$ and $B$, respectively.

By definition, $\sigma_i$ switches $c_1(\tau_E)$ and $c_1(\Oc_E(1))$.
Hence, $\sigma_i(c_1(L(\a_i)))=c_1(L(-\a_i))$. Since the Picard group of $G/P_i$
can be identified with with the sublattice $\{\l| (\l,\a_i)=0\}$ of the weight lattice
of $G$ (this follows from \cite[remark after Proposition 1.3.6]{Brion}
combined with \cite[Proposition 1.4.3]{Brion}) we also have
$\sigma_i(c_1(L(\l)))=c_1(L(\l))$ for all $\l$ perpendicular to $\a_i$. These two identities
imply the statement of the lemma.
\end{proof}

This lemma allows us to describe explicitly the action of $\sigma_i$
and hence of $A_i$ on any polynomial in the first Chern classes. Indeed, since
for any weight $\l$ we have $s_i\l=\l+k\a_i$ for some integer $k$, we can compute
$c_1(L(\sigma_i\l))=c_1(L(\l)\otimes L(\a_i)^k)$ as a power series in $c_1(L(\l))$ and
$c_1(L(\a_i))$ using the formal group law.
This will be used in the proof of Proposition \ref{p.Chevalley} below and in
Subsection 5.1.

\section{Chevalley-Pieri formulas}\label{s.CP}

A key ingredient for the classical Schubert calculus is the
Chevalley-Pieri formula for the product of the Schubert cycle
with the first Chern class of the line bundle on $X$, see e. g.
\cite[Proposition 4.1]{BGG} and \cite[Proposition 4.2]{De}.
We now establish analogous formulas for the products of
$Z_I$ and $\R_I$ with $c_1(L(\l))$ (without using that $Z_I=\R_I$).
At the end of this section, we explain why
in the case of algebraic cobordism this alone is not enough
to show that $Z_I=\R_I$, hence justifying our
different approach of the previous two sections.

\medskip

By $L(D)$ denote the line bundle corresponding
to the divisor $D$.
For each $l$-tuple $I$ as above, denote by $I^j$ the
$(l-1)$--tuple $(\a_1,\ldots,\hat \a_j,\ldots,\a_l)$.
For each root $\a$, define the linear function $(\cdot,\a)$ (that is, the
{\em coroot})
on the weight lattice of $G$ by the property $s_\a\l=\l-(\l,\a)\a$ for all weights $\l$.
(The pairing  $(a,b)$ is often denoted by $\langle a,b^\vee\rangle$
or by $\langle a,b\rangle$.)
Note that by
definition $(\l,\a)=(w\l,w\a)$ for all elements $w$ of the Weyl group.

\begin{prop}{\bf \em Geometric Chevalley-Pieri formula}\label{p.bundle}

{\em (1) (for Bott-Samelson resolutions)}
In the Picard group of $R_I$ we have
$$r_I^*L(\l)=\otimes_{j=1}^{l}L(R_{I^j})^{(\l,\b_j)}$$
where $\b_j=s_{l}\cdots s_{j+1}\a_j$.

{\em (2) (for Schubert cycles)}\cite[Proposition 4.1]{BGG},
\cite[Proposition 4.4]{De}, \cite{Che}
In the Chow ring of $X$ we have
$$c_1(L(\l))X_I=\sum_{j}(\l,\b_j)X_{I^j}$$
where the sum is taken over $j\in \{1,\ldots,l\}$ for which the decomposition
defined by $I^j$ is reduced.
\end{prop}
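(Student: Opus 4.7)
Part (2) is classical; we concentrate on (1) and proceed by induction on the length $l$ of $I$, using the inductive construction $R_I = R_{I^l} \times_{G/P_l} G/B$. This presents the first projection $\pi := p_1 \colon R_I \to R_{I^l}$ as a projective line fibration, namely $\P(E')$ with $E' := (\pi_l \circ r_{I^l})^* E$ for the rank two bundle $E$ on $G/P_l$ of Section \ref{ss.operators}. Throughout I would use three facts: $r_I = p_2$; the divisor $R_{I^l} \subset R_I$ is the image of the section $\iota \colon R_{I^l} \to R_I$, $x \mapsto (x, r_{I^l}(x))$; and for $j < l$, comparing fiber products yields $R_{I^j} = \pi^{-1}(R_{(I^l)^j})$, hence $L(R_{I^j}) = \pi^* L(R_{(I^l)^j})$.

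For the inductive step, each fiber of $\pi$ is sent by $r_I$ isomorphically onto a fiber of $\pi_l \colon G/B \to G/P_l$, so $r_I^* L(\lambda)$ restricts there to a line bundle of degree $(\lambda, \alpha_l)$, whereas $L(R_{I^l})$ has degree one on the fiber (the section meets it once). Consequently $r_I^* L(\lambda) \otimes L(R_{I^l})^{-(\lambda, \alpha_l)}$ has degree zero on every fiber, hence is of the form $\pi^* M$ for some $M \in \mathrm{Pic}(R_{I^l})$. Pulling back by $\iota$ and using $\pi \circ \iota = \mathrm{id}$ identifies $M = r_{I^l}^* L(\lambda) \otimes (\iota^* L(R_{I^l}))^{-(\lambda, \alpha_l)}$, so it remains to compute the self-intersection $\iota^* L(R_{I^l})$, i.e. the normal bundle of the section in $R_I$.

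This normal bundle computation is the main geometric input. Under the identification $R_I = \P(E')$, the section $\iota$ corresponds to the line subbundle $\tilde L := r_{I^l}^* \tau_E \subset E'$, so its normal bundle is $\tilde L^{-1} \otimes (E'/\tilde L)$. Pulling back the tautological sequence $0 \to \tau_E \to \pi_l^* E \to \Oc_E(1) \to 0$ on $G/B$ by $r_{I^l}$ gives $E'/\tilde L = r_{I^l}^* \Oc_E(1)$, and the identification $\tau_E^{-1} \otimes \Oc_E(1) = L(\alpha_l)$ from the proof of Lemma \ref{l.Weyl} then yields $\iota^* L(R_{I^l}) = r_{I^l}^* L(\alpha_l)$. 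Hence $M = r_{I^l}^* L(\lambda - (\lambda, \alpha_l)\alpha_l) = r_{I^l}^* L(s_l \lambda)$. Applying the inductive hypothesis to $r_{I^l}^* L(s_l \lambda)$, then using $\pi^* L(R_{(I^l)^j}) = L(R_{I^j})$ for $j < l$ together with the Weyl-invariance $(s_l \lambda, s_l \beta) = (\lambda, \beta)$ to rewrite the exponents as $(\lambda, \beta_j)$, and finally tensoring with the $j = l$ factor $L(R_{I^l})^{(\lambda, \alpha_l)} = L(R_{I^l})^{(\lambda, \beta_l)}$ (since $\beta_l = \alpha_l$ is an empty product of reflections) produces the claimed formula. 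The main obstacle is the normal bundle computation; it is however handled by the geometry of $G/B \to G/P_l$ already recorded in the proof of Lemma \ref{l.Weyl}, so the remaining steps are bookkeeping.
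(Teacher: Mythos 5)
Your argument is correct, and its skeleton is exactly the paper's: both reduce Proposition \ref{p.bundle}(1) by induction on $l$ to the one-step twist formula
$r_I^*L(\l)\cong p^*r_{I^l}^*L(s_l\l)\otimes L(R_{I^l})^{(\l,\a_l)}$ (Lemma \ref{l.bundle}), and then conclude with the same bookkeeping ($p^*R_{(I^l)^j}=R_{I^j}$ and $(s_l\l,s_{l-1}\cdots s_{j+1}\a_j)=(\l,\b_j)$). The one genuine difference is that the paper simply quotes Lemma \ref{l.bundle} from Demazure, whereas you prove it: you observe that $r_I^*L(\l)\otimes L(R_{I^l})^{-(\l,\a_l)}$ has fiberwise degree zero on the $\P^1$-fibration $\pi$, hence descends, and you identify the descended bundle by restricting along the section $\iota$ and computing its self-intersection $\iota^*L(R_{I^l})=N_{R_{I^l}/R_I}=\tilde L^{-1}\otimes(E'/\tilde L)=r_{I^l}^*L(\a_l)$ via the tautological sequence and the identity $\tau_E^{-1}\otimes\Oc_E(1)=L(\a_i)$ already established in the proof of Lemma \ref{l.Weyl}. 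This buys a self-contained proof in the algebro-geometric setting (the paper only asserts that Demazure's, resp.\ Bressler--Evens', argument carries over), at the cost of a page of standard normal-bundle computations. The only cosmetic omission is the base of the induction, but your inductive step applied with $l=1$ over $R_e=pt$ reproduces the paper's base case $r_1^*L(\l)=\Oc_{\P^1}((\l,\a_1))$, so nothing is missing.
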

The first part of this proposition was proved in \cite[Proposition 4]{BE} in the
topological setting (for flag varieties of compact Lie groups). It is not hard to check that the proof
carries over to algebro-geometric setting. We instead provide a shorter proof along the same lines.
Our proof is based on the following lemma.
\begin{lemma}\cite[Proposition 2.1]{De}\label{l.bundle}
Let $p:R_I\to R_{I^l}$ be the natural projection
(coming from the fact that we defined $R_I$ as a projective bundle over $R_{I^l}$).
Then we have an isomorphism
$$r_I^*L(\l) \cong p^*r_{I^l}^*L(s_l\l)\otimes L(R_{I^l})^{(\l,\a_l)}$$
of line bundles on $R_I$.
\end{lemma}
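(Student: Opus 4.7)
The plan is to exploit the $\P^1$-bundle structure of $p : R_I \to R_{I^l}$. Since $R_{I^l}$ embeds into $R_I$ as a section of $p$, every line bundle on $R_I$ can be written uniquely in the form $L(R_{I^l})^a \otimes p^*N$ with $a \in \Z$ and $N \in \mathrm{Pic}(R_{I^l})$. The lemma therefore reduces to two independent computations: the fiber degree $a$ of $r_I^*L(\l)$, and the restriction $N$ of $r_I^*L(\l) \otimes L(R_{I^l})^{-a}$ to the section.

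To set things up, I would make the projective bundle structure explicit. Writing $F := (\pi_l \circ r_{I^l})^*E$ with $E = {\pi_l}_*L(\rho)$, base change gives $R_I = \P(F)$ and $r_I^*\Oc_E(1) = \Oc_F(1)$. The section $\sigma(x) = (x, r_{I^l}(x))$ corresponds to the sub-line-bundle $T := r_{I^l}^*\tau_E$ of $F$; using the identifications $\Oc_E(1) = L(\rho)$ and $\tau_E = L(\rho - \a_l)$ recorded in the proof of Lemma \ref{l.Weyl}, we have $T = r_{I^l}^*L(\rho - \a_l)$.

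For the fiber degree, a fiber of $p$ maps isomorphically under $r_I$ to a fiber $P_l/B \cong \P^1$ of $\pi_l$, on which $L(\l)$ has degree $(\l, \a_l)$ by the standard Borel--Weil computation for $\mathrm{SL}_2$. Since $L(R_{I^l})$ has fiber degree $1$ (being a section), this forces $a = (\l,\a_l)$. For the restriction to the section, composition $r_I \circ \sigma = r_{I^l}$ shows that the left-hand side restricts to $r_{I^l}^*L(\l)$, while $L(R_{I^l})|_{R_{I^l}}$ equals the normal bundle of the section in $\P(F)$, which is $T^* \otimes (F/T) = r_{I^l}^*\bigl(L(\a_l - \rho) \otimes L(\rho)\bigr) = r_{I^l}^*L(\a_l)$. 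Solving the resulting identity yields $N = r_{I^l}^*L\bigl(\l - (\l,\a_l)\a_l\bigr) = r_{I^l}^*L(s_l\l)$, matching the claimed formula.

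The main obstacle in this plan is the normal bundle computation $L(R_{I^l})|_{R_{I^l}} = r_{I^l}^*L(\a_l)$: it requires matching the section of $p$ with the correct tautological sub-line-bundle and then invoking the nontrivial identification $\Oc_E(1)\otimes\tau_E^{-1} = L(\a_l)$ from Lemma \ref{l.Weyl}. Everything else is routine $\P^1$-bundle bookkeeping.
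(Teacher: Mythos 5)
Your argument is correct. Note, however, that the paper does not actually prove Lemma \ref{l.bundle} at all: it simply cites \cite[Proposition 2.1]{De} and uses the statement as input for the induction establishing Proposition \ref{p.bundle}(1). So there is no in-paper proof to compare against; what you have done is supply the missing verification. Your route — decompose $\mathrm{Pic}(R_I)=\Z\cdot L(R_{I^l})\oplus p^*\mathrm{Pic}(R_{I^l})$, pin down the fiber degree $(\l,\a_l)$ by restricting to a fiber $P_l/B\cong\P^1$ (the same $SL_2$ weight computation the paper uses for the base case $l=1$), and then determine the $p^*$-part by restricting to the section — is the standard one and is essentially Demazure's. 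The only genuinely nontrivial step is the one you flag: identifying the section $x\mapsto(x,r_{I^l}(x))$ with the sub-line-bundle $r_{I^l}^*\tau_E\subset F$ and computing its normal bundle as $\mathrm{Hom}(T,F/T)=r_{I^l}^*\bigl(\tau_E^{-1}\otimes\Oc_E(1)\bigr)=r_{I^l}^*L(\a_l)$, which correctly invokes the identification proved in Lemma \ref{l.Weyl}. Combined with $r_I\circ\sigma=r_{I^l}$ and the self-intersection formula $L(R_{I^l})|_{R_{I^l}}=N_{R_{I^l}/R_I}$, this gives exactly $N=r_{I^l}^*L(\l-(\l,\a_l)\a_l)=r_{I^l}^*L(s_l\l)$. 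All steps check out; this is a complete and correct proof of the lemma.
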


Proposition \ref{p.bundle}(1) now follows from Lemma \ref{l.bundle} by induction on $l$.
The base $l=1$,
that is $r_1^*L(\l)=\Oc_{\P^1}(1)^{(\l,\a_1)}$, follows from the fact that
$r_1:R_1\to X$ maps $R_1$ isomorphically to $P_{1}/B\cong\P^1$, which can be regarded
as the flag variety for $SL_2$. Then the weight $\l$ restricted to
$SL_2$ is equal to $(\l,\a_1)$ times the highest weight of the tautological
representation of $SL_2$, which corresponds to the line bundle $\Oc_{\P^1}(1)$ on $\P^1$.
 To prove the induction step plug in the induction
hypothesis for $r_{I^l}^*L(s_l\l)=\otimes_{j=1}^{l-1}L(R_{I^{j,l}})^{(s_l\l,s_{l-1}\cdots s_{j+1}\a_j)}$
into the lemma and use that $(s_l\l,s_{l-1}\cdots s_{j+1}\a_j)=(\l,\b_j)$ (since $s_l^2=e$) and
$p^*R_{I^{j,l}}=R_{I^j}$.

Proposition \ref{p.bundle}(1) was used in \cite{BE} to establish
an algorithm for computing $c_1(L(\l))Z_I$ in
$\Omega^*(X)$ \cite{BE}. We now briefly recall this algorithm.
By the projection formula we have
$$c_1(L(\l))Z_I=(r_I)_*(c_1(r_I^*L(\l))).$$
Note that the usual projection formula
with respect to smooth projective morphisms $f:X \to Y$
holds for algebraic cobordism as well. This follows from
the definition of products via pull-backs along the
diagonal and the base change axiom $(A2)$ of \cite{LM}
applied to the cartesian square obtained from
$Y \stackrel{diag}{\to} Y \times Y \stackrel{p \times id}{\leftarrow}
X \times Y$.

One can now use Proposition \ref{p.bundle}(1) and the formal group law
to compute $c_1(r_I^*L(\l))$ in terms of the Bott-Samelson classes in $\Omega^*(R_I)$
by an iterative procedure
(since the multiplicative structure of $\Omega^*(R_I)$ can be determined by the
projective bundle formula and the Chern classes arising this way again have form
$c_1(L(\l))$ for some $\l$). After $c_1(r_I^*L(\l))$ is written as
$\sum_{J\subset I}a_J[R_J]$ for some $a_J\in\L$ it is easy
to find $(r_I)_*(c_1(r_I^*L(\l)))$ since $(r_I)_*[R_J]=Z_J$.

However, this procedure is rather lengthy, and we will not use it. Instead, we will prove
a more explicit formula for $c_1(L(\l))Z_I$ (see formula 5.1 below) using our
algebraic Chevalley-Pieri formula together with Theorem \ref{main}.
\begin{prop}\label{p.Chevalley} {\bf \em  Algebraic Chevalley-Pieri formula:}

{\em (1) (cobordism version)}
Let
$A_1=A_{\a_1}$,\ldots, $A_l=A_{\a_l}$ be the operators on $\Omega^*(X)$ corresponding
to $\a_1$,\ldots, $\a_l$. Then we have
$$c_1(L(\l))A_1\ldots A_l\R_e=\sum_{j=1}^{l}A_1\ldots A_{j-1}
\frac{c_1(L(\l_j))-c_1(L(s_j\l_j))}
{c_1(L(\a_j))}A_{j+1}\ldots A_l\R_e$$
in $\Omega^*(X)$, where $\l_j=s_{j-1}\cdots s_1\l$ and $s_j=s_{\a_j}$ is the
reflection corresponding to the root $\a_j$.

{\em (2) (Chow ring version)} \cite[Corollary 3.7]{BGG} Let
$A_1=A_{\a_1}$,\ldots, $A_l=A_{\a_l}$ be the operators on $CH^*(X)$
corresponding to $\a_1$,
\ldots, $\a_l$. Then
$$c_1(L(\l))A_1\ldots A_l\R_e=\sum_{j=1}^{l}(\l,s_{1}\cdots s_{j-1}\a_j)A_1\ldots \hat A_j\ldots A_l\R_e$$
in $CH^*(X)$.
\end{prop}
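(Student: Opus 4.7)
The plan is to establish a Leibniz-type identity for the operator $A_i$ with respect to multiplication by first Chern classes, and then to deduce statement (1) by induction on $l$. Statement (2) will follow by specializing (1) to the additive formal group law of the Chow ring.

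\smallskip

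The key identity I would prove first is: for any weight $\l$ and any $f \in \Omega^*(X)$,
$$A_i(c_\l f) = c_{s_i\l}\, A_i(f) + \frac{c_\l - c_{s_i\l}}{c_{\a_i}}\, f, \qquad (\star)$$
where $c_\mu := c_1(L(\mu))$. The quotient on the right represents a well-defined element of $\Omega^*(X)$: since $\l - s_i\l = (\l,\a_i)\a_i$, the isomorphism $L(\l) \cong L(s_i\l) \otimes L(\a_i)^{(\l,\a_i)}$ together with the formal group law shows that $c_\l - c_{s_i\l}$ is divisible by $c_{\a_i}$. To prove $(\star)$, I use Definition \ref{d.operator}: in the completion $\Omega^*(G/P_i)[[y_1,y_2]]$ the operator $A$ takes the form $A(g) = g/c + \sigma(g)/\chi(c)$ with $c = F(y_1,\chi(y_2))$ descending to $c_{\a_i}$, and by Lemma \ref{l.Weyl} the descended $\sigma_i$ is a ring endomorphism with $\sigma_i(c_\mu) = c_{s_i\mu}$. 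Expanding both sides of $A_i(c_\l f) - c_{s_i\l} A_i(f)$ in this form, the terms involving $\sigma_i(f)/\chi(c_{\a_i})$ cancel identically, leaving $(c_\l - c_{s_i\l})\,f/c_{\a_i}$, which descends to the well-defined class of $\Omega^*(X)$ described above.

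\smallskip

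With $(\star)$ in hand, I would prove (1) by induction on $l$. The base case $l = 0$ reduces to $c_\l \cdot \R_e = 0$, which holds by degree reasons, since $\R_e$ has codimension $\dim X$ and $\Omega^*(X)$ vanishes in codimension exceeding $\dim X$. For the inductive step, set $f := A_2 \cdots A_l \R_e$ and apply $(\star)$ with $\mu = s_1\l$ in place of $\l$ (so that $s_1\mu = \l$); rearranging yields
$$c_\l \cdot A_1 A_2 \cdots A_l \R_e = A_1\bigl(c_{s_1\l}\, A_2 \cdots A_l \R_e\bigr) + \frac{c_\l - c_{s_1\l}}{c_{\a_1}}\, A_2 \cdots A_l \R_e.$$
The second summand is precisely the $j=1$ term of the claimed formula, since $\l_1 = \l$. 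For the first summand, apply the induction hypothesis to the chain $A_2 \cdots A_l$ with starting weight $s_1\l$: its shifted weights $s_{j-1} \cdots s_2(s_1\l)$ coincide with $\l_j = s_{j-1} \cdots s_1\l$, so after applying the $\L$-linear operator $A_1$ to each term we recover exactly the $j \ge 2$ summands of the claimed formula.

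\smallskip

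Part (2) follows from (1) by specializing to the additive formal group law: there $c_\l$ is $\Z$-linear in $\l$, so $c_\l - c_{s_i\l} = c_{(\l,\a_i)\a_i} = (\l,\a_i)\, c_{\a_i}$, whence the $j$-th coefficient $(c_{\l_j} - c_{s_j\l_j})/c_{\a_j}$ equals $(\l_j,\a_j) = (\l, s_1 \cdots s_{j-1}\a_j)$ by Weyl-invariance of the coroot pairing.

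\smallskip

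The main obstacle is the rigorous justification of $(\star)$: the individual fractions $g/c_{\a_i}$ and $\sigma_i(g)/\chi(c_{\a_i})$ appearing in the derivation are not elements of $\Omega^*(X)$ (indeed $c_{\a_i}$ is a zero-divisor there, since multiplication by it annihilates any top-dimensional class), so the computation must first be carried out in the completion $\Omega^*(G/P_i)[[y_1,y_2]]$ where $F(y_1,\chi(y_2))$ is not a zero-divisor, and only the sum descends to a well-defined class on $\Omega^*(X)$.
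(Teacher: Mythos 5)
Your proof is correct and follows essentially the same route as the paper: your identity $(\star)$ is exactly the paper's commutation relation $c_1(L(\l))A_1 - A_1 c_1(L(s_1\l)) = \bigl(c_1(L(\l))-c_1(L(s_1\l))\bigr)/c_1(L(\a_1))$ rewritten in Leibniz form, your induction on $l$ unrolls to the paper's iterative argument of moving $c_1(L(s_1\l))$ to the right and killing the final term by the same degree-vanishing, and part (2) is obtained identically by specializing to the additive formal group law. Your closing remark on performing the computation in $\Omega^*(G/P_i)[[y_1,y_2]]$, where $F(y_1,\chi(y_2))$ is not a zero-divisor, merely makes explicit a point the paper leaves implicit in Definition \ref{d.operator}.
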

\begin{proof}
First, note that $\frac{c_1(L(\l_j))-c_1(L(s_j\l_j))}
{c_1(L(\a_j))}$ is a well-defined element in $\Omega^*(X)$ because $s_j\l=\l-(\l,\a_j)\a_j$
(and hence $L(\l)=L(s_j\l)\otimes L(\a_j)^{(\l,\a_j)}$) and the formal group law  expansion for
$c_1(L_1\otimes L_2^k)-c_1(L_1)$
is divisible by $c_1(L_2)$ for any integer $k$ \cite[(2.5.1)]{LM}.
Next we show that
$$c_1(L(\l))A_1-A_1c_1(L(s_1\l))=\frac{c_1(L(\l))-c_1(L(s_1\l))}{c_1(L(\a_1))},$$
where both sides are regarded as operators on
$\Omega^*(X)$.
Indeed, by definition $A_1=(1+\sigma_1)\frac{1}{c_1(L(\a_1))}$ and
$c_1(L(\l))\sigma_1=\sigma_1c_1(L(s_1\l))$ by Lemma \ref{l.Weyl}.

Hence, we can write
$$c_1(L(\l))A_1\ldots A_l\R_e=\frac{c_1(L(\l))-
c_1(L(s_1\l))}{c_1(L(\a_1))}A_2\ldots A_l\R_e+A_1c_1(L(s_1\l))A_2\ldots A_l\R_e,
$$
and then continue moving $c_1(L(s_1\l))$ to the right until we are left with with
the term $A_1\ldots A_lc_1(L(s_l\ldots s_1\l))\R_e$. This term is equal to zero
since $c_1(L(s_l\ldots s_1\l))\R_e$ is the product of more than $\dim X$ first
Chern classes, and hence its degree is greater than $\dim X$.
The Chow ring case follows immediately from the cobordism case since
$$\frac{c_1(L(\l))-c_1(L(s_j\l))}{c_1(L(\a_j))}=(\l,\a_j)$$
in the Chow ring. The last identity holds because the formal group law  for the Chow ring is additive,
and hence $c_1(L(\l))-c_1(L(s_j\l))=
(\l,\a_j)c_1(L(\a_j))$.
\end{proof}
The second part of this proposition was proved in \cite{BGG} by more involved
calculations. A calculation similar to ours was used in \cite{RP} to deduce a
combinatorial Chevalley-Pieri formula for $K$-theory. It would be interesting to
find an analogous combinatorial interpretation of our Chevalley-Pieri formula
in the cobordism case.

Note that in the case of Chow groups, the algebraic Chevalley-Pieri formula for
$A_l\ldots A_1\R_e$ is exactly the same as
the geometric one for the Schubert cycle $X_I$. Together with the Borel presentation
this easily implies  that the polynomial
$A_l\ldots A_1\R_e$ represents the Schubert cycle $X_I$ whenever $I$
defines a reduced decomposition \cite{BGG}.
Indeed, we can proceed by the induction on $l$.  Algebraic and geometric Chevalley-Pieri formulas allow to compute the
intersection indices of $A_l\ldots A_1\R_e$ and of $X_I$, respectively, with the product of $k$ first Chern classes,
and the result is the same in both cases by the induction hypothesis (for all $k>0)$.
By the Borel presentation we know that the  products of
first Chern classes span $\oplus_{i=1}^d CH^i(X)$. Hence, by the non-degeneracy of the
intersection form on $CH^*(X)$ (that is, by Poincar\'e duality) we have that $A_l\ldots A_1\R_e-X_I$
must lie in $CH^{d}(X)=\Z[pt]$ (that is, in the orthogonal complement to $\oplus_{i=1}^d CH^i(X)$).
This is only possible if $A_l\ldots A_1\R_e-X_I=0$ (unless $l=0$, which is the induction base).
Note that the only geometric input in this proof is the geometric Chevalley-Pieri formula.

In the cobordism case, it is not immediately clear why geometric and algebraic
Chevalley-Pieri formulas are the same  (though, of course, it follows from
Theorem \ref{main}). But even without using that $\R_I=Z_I$ it might be possible to
show that both formulas have the same structure
coefficients, that is, if $c_1(L(\l))Z_I=\sum_{J\subset I}a_JZ_J$ then necessarily
$c_1(L(\l))\R_I=\sum_{J\subset I}a_J\R_J$ with the same coefficients $a_J\in\L$. However, this does not
lead to the proof of $\R_I=Z_I$ as in the case of the Chow ring. The reason
is that even though there is an analog of Poincar\'e duality for the cobordism rings of
cellular varieties, this only yields an equality $\R_I=Z_I$ up to a multiple of $[pt]$
(as in Lemma \ref{l.poincare} below in the case of $GL_n/B$),
and this is not enough to carry out the desired induction argument.
For the Chow ring, Poincar\'e duality also yields only an equality up to the class of a point,
but unless $I=\emptyset$, the difference $\R_I-Z_I$ (where now $Z_I$ means the Schubert cycle and not the Bott-Samelson class)
can not be a non-zero multiple of $[pt]$ because the  coefficient ring $CH^*([pt])=CH^*(k)\cong \Z$ is
concentrated in degree zero, hence has no nonzero elements in the corresponding degree $l-d$. However,
for algebraic cobordism the coefficient ring $\Omega^*(k) \cong \L$ does contain plenty of elements
of negative degree, so one can not deduce $\R_I=Z_I$.

\begin{lemma} \label{l.poincare}
Let $X=GL_n/B$ and let $c\in\Omega^*(X)\cong
\L[x_1,\ldots,x_n]/S$ be a homogeneous element of degree
$l$ such that the product of $c$ with any non-constant monomial
in $x_1$,\ldots,$x_n$ is zero.
Then $c$ belongs to $\L^{l-d}[pt]$, where $d=n(n-1)/2$.
\end{lemma}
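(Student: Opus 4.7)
The strategy is to interpret the hypothesis as $c\in\operatorname{Ann}_{\Omega^*(X)}(\mathfrak{m})$, where $\mathfrak{m}=(x_1,\ldots,x_n)$ is the irrelevant ideal in the coinvariant algebra $\L[x_1,\ldots,x_n]/S\cong\Omega^*(X)$, and to compute this annihilator by reducing to the corresponding Poincar\'e duality statement for the Chow ring via a flat base change from $\Z$ to $\L$. One inclusion is immediate: since $x_i\cdot[pt]$ has polynomial degree $d+1$, which exceeds the top polynomial degree of any basis monomial in $\Omega^*(X)$, we have $\L\cdot[pt]\subseteq\operatorname{Ann}_{\Omega^*(X)}(\mathfrak{m})$, and only the reverse inclusion needs proof.

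Comparing the Borel presentations of Theorem \ref{Borel} for $CH^*(X)$ and $\Omega^*(X)$ gives an isomorphism of graded rings
$$\Omega^*(X)\;\cong\;CH^*(X)\otimes_{\Z}\L,$$
under which the two classes of a point match (compare Remark \ref{r.Borel}). First I would handle the Chow side: since $X$ is a smooth projective cellular variety, the intersection pairing $CH^i(X)\times CH^{d-i}(X)\to CH^d(X)=\Z\cdot[pt]$ is perfect over $\Z$ because opposite Schubert classes form dual bases. Thus any homogeneous class in $CH^{<d}(X)$ killed by $\mathfrak{m}_\Z=(x_1,\ldots,x_n)$ must vanish, yielding $\operatorname{Ann}_{CH^*(X)}(\mathfrak{m}_\Z)=\Z\cdot[pt]$.

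Transporting this annihilator identity across $-\otimes_\Z\L$ works because $CH^*(X)$ is a free $\Z$-module of finite rank (cellularity): each multiplication map $M_{x_i}$ on $CH^*(X)$ has free kernel and free image as subgroups of a free abelian group, so tensoring with $\L$ preserves the short exact sequence $0\to \ker M_{x_i}\to CH^*(X)\to \operatorname{im}M_{x_i}\to 0$, and the same preservation survives the finite intersection $\bigcap_{i=1}^n\ker M_{x_i}$ because its quotient inside $CH^*(X)$ is again a subgroup of a free $\Z$-module, hence $\Z$-flat. Combining these, $\operatorname{Ann}_{\Omega^*(X)}(\mathfrak{m})=\Z[pt]\otimes_\Z\L=\L\cdot[pt]$; writing $c=\lambda[pt]$ and equating total degrees ($\deg[pt]=d$) forces $\lambda\in\L^{l-d}$, which is what we want. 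The delicate point is verifying that the finite intersection commutes with $-\otimes_\Z\L$; this is a routine freeness argument, but essential, and is the only step beyond Theorem \ref{Borel} that relies on the cellularity of $X$.
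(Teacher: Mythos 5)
Your proof is correct, and it reaches the conclusion by a cleaner, more structural route than the paper's. The paper's own argument uses the same two inputs --- integral Poincar\'e duality for the coinvariant algebra $\Z[x_1,\ldots,x_n]/S_{\Z}$ and the fact that $\L\cong\Z[a_1,a_2,\ldots]$ is free over $\Z$ --- but unwinds them by hand: it picks a homogeneous representative $\hat c$, expands it over the monomial $\Z$-basis of $\L$ as $\hat c=c_0+a_1c_1+a_2c_2+\cdots$ with integral polynomials $c_{i_1,\ldots,i_s}$, and isolates these components one at a time by multiplying with monomials of the successive complementary degrees $d-l$, $d-l-1,\ldots$, using that the ideal $S$ contains every integral polynomial of degree $>d$ so that all higher components are truncated away at each stage; each surviving component of degree $<d$ is then killed by the integral Poincar\'e duality of \cite[Proposition 2.5.7]{Ma}, and the degree-$d$ ones are integer multiples of $[pt]$. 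Your formulation --- the isomorphism $\Omega^*(X)\cong CH^*(X)\otimes_{\Z}\L$ coming from the two Borel presentations, the computation $\operatorname{Ann}_{CH^*(X)}(\mathfrak{m}_{\Z})=\Z\cdot[pt]$, and the commutation of $\operatorname{Ann}(\mathfrak{m})=\bigcap_i\ker M_{x_i}$ with base change along $\Z\to\L$ --- is precisely this componentwise induction packaged as flat base change, and it has the advantage of making visible that the lemma holds for any oriented theory whose coefficient ring is free (indeed flat) over $\Z$. Two minor points are worth spelling out: the implication ``$y\in CH^{<d}(X)$ killed by $\mathfrak{m}_{\Z}$ forces $y=0$'' uses not only perfectness of the pairing but also that $CH^{>0}(X)=\mathfrak{m}_{\Z}$, which the Borel presentation supplies; and for the intersection of kernels your freeness detour, while correct, is not needed, since $\bigcap_i\ker M_{x_i}=\ker\bigl(CH^*(X)\to\bigoplus_i CH^*(X)\bigr)$ and kernels commute with any flat base change.
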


\begin{proof} We will use that the ideal $S$ contains all homogeneous polynomials
of degree greater than $d$ with integer coefficients \cite[Corollary 2.5.6]{Ma}.
Let $\hat{c}$ be a homogenous element in $\L[x_1,...,x_n]$ that
represents $c$. Recall that $\L$ is isomorphic to the graded polynomial
ring $\Z[a_1,a_2,\ldots]$
in countably many variables, where $a_i$ has degree $-i$.
Therefore $\hat{c}$ has a unique decomposition as a sum
of integral polynomials with coefficients being
monomials in the $a_i$, that is
$$\hat{c}=c_0+ a_1c_{1}+ a_2 c_2 + a_1^2 c_{1,1} + a_3 c_3
+ a_1 a_2 c_{1,2} + a_1^3 c_{1,1,1} + \ldots$$
where $c_{i_1,...,i_s}$ is a polynomial of degree $l-\sum \deg(a_{i_j})$
with integer coefficients. Note that we might
choose a $\hat{c}$ such that the sum is
finite since $c_{i_1,...,i_s}$ vanishes modulo $S$ if
$l-\sum \deg(a_{i_j}) > d$.
Now we multiply $\hat{c}$ with an arbitrary monomial $m_{d-l}$
in the $x_i$ of degree $d-l$. Since $m_{d-l}c=0$  it follows that $m_{d-l}c_0$ is zero modulo $S$.
By algebraic Poincar\'e duality \cite[Proposition 2.5.7]{Ma}
it follows that $c_0=0$
modulo $S$. Next, we multiply with monomials of degree
$d-l-1$ to deduce that $c_1$ equals zero modulo $S$,
and then deduce inductively that all the $c_{i_1,...,i_s}$ of degree
strictly less than $d$ are zero. It remains to note that each
$c_{i_1,...,i_s}$ of degree $d$ is equal to an integer multiple of $[pt]$
(since all homogeneous polynomials of degree $d$ with integer coefficients are equal
to a multiple of $\R_e$ modulo the ideal $S$ \cite[2.5.2]{Ma})
Hence $\hat{c}=a[pt]$ for some $a\in\L$, which must have degree $l-d$ by homogeneity
of $\hat{c}$.
\end{proof}

\section{Computations and examples}

Until now, we used the formal group law  of algebraic cobordism
(i.e., the universal one) as little as possible in order to make our
presentation simpler. In this section, we make the results of the previous
section more explicit using this formal group law.
In particular, we give an explicit formula for the products of a Bott-Samelson
resolution with the first Chern class of a line bundle in terms of other Bott-Samelson
resolutions (see formula 5.1 below). Using this formula, we give an algorithm
for computing the product of two Bott-Samelson resolutions.

First, we show that the operator $A$ from Section 2 and the operator $\hat A_{\a}$
from Section 3 are well-defined.

We use notation of Subsection 2.1, so $F(u,v)$ is the universal formal group law
and $\chi(u)$ is the inverse for the universal formal group law  defined by the identity
$F(u,\chi(u))=0$. To show that the operator $A=(1+\sigma)\frac{1}{F(y_1,\chi(y_2))}$
is well defined on $\Omega^*(X)[[y_1,y_2]]$  it is enough to show that
$A(m)$ is a formal power series
for any monomial $m=y_1^{k_1}y_2^{k_2}$.
We compute
$A(y_1^{k_1}y_2^{k_2})$ using that $y_1=F(x,y_2)=y_2+\chi(x)p(x,y_2)$ and
$y_2=F(\chi(x),y_1)=y_1+\chi(x)p(\chi(x),y_1)$ where $x=F(y_1,\chi(y_2))$ and
$p(u,v)=\frac{F(u,v)-u}{v}$ is a well-defined power series (since $F(u,v)-u$
contains only terms $u^iv^j$ for $j\ge1$).
 We get
$$A(y_1^{k_1}y_2^{k_2})=(1+\sigma)\frac{y_1^{k_1}y_2^{k_2}}{x}=
\frac{y_1^{k_1}y_2^{k_2}}{x}+\frac{y_1^{k_2}y_2^{k_1}}{\chi(x)}
=\frac{(y_2+\chi(x)p(x,y_2))^{k_1}(y_1+\chi(x)p(\chi(x),y_1))^{k_2}}{x}+
\frac{y_2^{k_1}y_1^{k_2}}{\chi(x)}=$$
$$=y_2^{k_1}y_1^{k_2}q(x,\chi(x))+
\frac{\mbox{ terms divisible by } x \mbox{ or by } \chi(x)}{x}.$$
The second term in the last expression is a power series since the formal group law
expansion for $\chi(x)$ is divisible
by $x$  \cite[(2.5.1)]{LM}.

A similar argument shows that the operator $\hat A_{\a}$
from Section 3 is indeed well-defined on the whole ring
$\L[[x_1,\ldots,x_n]]$ for any root $\a$. Indeed, by relabeling
$x_1$,\ldots, $x_n$ we can assume that $\a=e_1-e_2$. Then for any monomial
$m=x_1^{k_1}x_2^{k_2}\ldots x_n^{k_n}$ we have
$$\hat A_{\a}(m)=x_3^{k_3}\ldots x_n^{k_n}\hat A_{\a}(x_1^{k_1}x_2^{k_2}).$$
Then exactly the same argument as the one above for $A$ shows that
$\hat A_{\a}(x_1^{k_1}x_2^{k_2})$ is a power
series in $x_1$ and $x_2$.

\subsection{Algorithm for computing the products of Bott-Samelson resolutions}
We now produce an explicit algorithm for computing the product of the
Bott-Samelson classes
$Z_I$  in terms of other Bott-Samelson classes,
where $I=(\alpha_1,...,\alpha_l)$.
The key ingredient is our algebraic Chevalley-Pieri
formula (Proposition \ref{p.Chevalley}) which can be reformulated as follows
$$c_1(L(\l))A_1\ldots A_lZ_e=\sum_{j=1}^{l}A_1\ldots A_{j-1}
A_j^*(c_1(L(\l_j)))A_{j+1}\ldots A_lZ_e,$$
where $\l_j=s_{j-1}\cdots s_1\l$ (in other words, $c_1(L(\l_j))=[\sigma_{j-1}\ldots\sigma_1](c_1(L(\l)))$)
and the operator $A_j^*$ is defined as follows
$$A_j^*=A_{\a_j}^*=\frac{1}{c_1(L(\a_j))}(1-\sigma_{\a_j}).$$
We can compute $A_j^*$ on any polynomial in the first Chern classes by the same methods
as $A_j$ (see the end of Section 3). Note that for the Chow ring $A_j=A_j^*$
(this follows from Lemma 3.4
 and the fact that $s_j\a_j=-\a_j$ and $c_1(L(\a_j))=-c_1(L(-\a_j))$ for the additive formal group law),
but for the algebraic cobordism ring this is no longer true.

More generally, for any polynomial $f=f(c_1(L(\mu_1)),\ldots, c_1(L(\mu_k)))$ in the first Chern classes
of some line bundles on $X$, we can compute its product with $A_1\ldots A_lZ_e$ by exactly the same argument as in the proof of Proposition \ref{p.Chevalley}:
$$f\cdot A_1\ldots A_lZ_e=\sum_{j=1}^{l}A_1\ldots A_{j-1}
[A_j^*\sigma_{j-1}\ldots\sigma_1](f)A_{j+1}\ldots A_lZ_e+
A_1\ldots A_l[\sigma_l\ldots\sigma_1](f)Z_e \ \ (5.0)$$
Note that the last term on the right hand side is equal to the constant term of the polynomial
$[\sigma_l\ldots\sigma_1](f)$ (which is of course the same as the constant term of $f$) times
$A_1\ldots A_lZ_e$.
In particular, for $f=c_1(L(\l))$ this term vanishes modulo $S$.
Here and below, by the ``constant term'' of a polynomial in
$\L[x_1,\ldots,x_n]$ we mean the term of polynomial degree zero (the total
degree of such a constant term might be negative since the Lazard
ring $\L$ contains elements of negative degree).
Note that all elements of
$\L\subset \L[x_1,\ldots,x_n]$ are invariant under the operators $\sigma_i$,
and hence commute with the operators $A_i$. For an arbitrary reductive group, the constant term of
an element $f\in\Omega^*(X)$
is defined as the product of $f$ with the class of a point.

It is now easy to show by induction on $l$ that
$$fA_1\ldots A_lZ_e=\sum_{J\subset I}a_J(f)[\prod_{i\in{I\setminus J}}A_i]Z_e,$$
where $a_J(f)$ for the $k$-subtuple $J=(\a_{j_1},\dots,\a_{j_k})$ of $I$ is
the constant term in the expansion for
$[\sigma_l\ldots\sigma_{j_k+1}A_{j_k}^*\sigma_{j_k-1}\ldots
\sigma_{j_1+1}A_{j_1}^*\sigma_{j_1-1}\ldots\sigma_1]f$,
which is invariant under $\sigma_i$ (for all $i$) and hence equal to
$[A_{j_k}^*\sigma_{j_k-1}\ldots
\sigma_{j_1+1}A_{j_1}^*\sigma_{j_1-1}\ldots\sigma_1]f$.
Indeed, we first use formula (5.0) above and then apply the induction hypothesis to all terms in the
right hand side except for
the last term, which already has form $a_J(f)[\prod_{i\in{I\setminus J}}A_i]Z_e$
for $J=\emptyset$. We get
$$A_1\ldots A_{j-1}
[A_j^*\sigma_{j-1}\ldots\sigma_1](f)A_{j+1}\ldots A_lZ_e=$$
$$=A_1\ldots A_{j-1}
\sum_{J\subset I\setminus\{1,\ldots,j\}}a_J([A_j^*\sigma_{j-1}\ldots\sigma_1](f))
[\prod_{i\in{I\setminus (J\cup \{1,\ldots,j\})}}A_i]Z_e=$$
$$=\sum_{J'\subset I}a_{J'}(f)[\prod_{i\in{I\setminus {J'}}}A_i]Z_e,$$
where the last summation goes over all subsets $J'$ of $I$ that do contain $j$ but do
not contain $1$,\ldots,$j-1$. Plugging this back into formula (5.0) we get the desired formula.
Combining this with Theorem \ref{main}, we get the following formula
in $\Omega^*(X)$ for the product
of the Bott-Samelson class $Z_I$ with the first Chern class $c_1(L(\l))$
in terms of other Bott-Samelson classes
$$c_1(L(\l))Z_I=\sum_{J\subset I}b_J(\l)Z_{I\setminus J}, \eqno (5.1)$$
where $b_J(\l)$ is the constant term in the expansion for
$$[A_{j_1}^*\sigma_{j_1+1}\ldots
\sigma_{j_k-1}A_{j_k}^*\sigma_{j_k+1}\ldots\sigma_l](c_1(L(\l))).$$
We changed the order of the $\sigma_i$ when passing from $a_J$
to $b_J$ since $Z_I=A_l\ldots A_1Z_e$.
Note that for $J=\emptyset$ we have $b_J=0$, and for $J=(\a_j)$ we have $b_J=(\l,\b_j)$
since the constant term in
$A_j^*(c_1(L(s_{j+1}\ldots s_{l}\l)))=
\frac{c_1(L(s_{j+1}\ldots s_{l}\l))-c_1(L(s_js_{j+1}\ldots s_{l}\l))}{c_1(L(\a_j))}$
is equal to $(s_{j+1}\ldots s_{l}\l,\a_j)$
(see the proof of Proposition \ref{p.Chevalley}, and Proposition \ref{p.bundle} for the definition of $\beta_i$), which is equal to $(\l,\b_j)$.
So the lowest order terms (with respect to the polynomial grading) of this formula give an analogous formula for the Chow ring as expected.
\medskip

We now have assembled all necessary tools for actually performing the
desired Schubert calculus. Namely, to compute the product $Z_IZ_J$ we apply
the following procedure (which is formally similar
to the one for the Chow ring).
We replace $Z_J$ with the respective polynomial $\R_J$ in the first Chern classes
(using Theorem \ref{main} together with the formula for $Z_e$) and then compute
the product of $Z_I$ with each monomial in $\R_J$ using repeatedly formula (5.1).
Note that formula (5.1) allows us to make this algorithm more explicit than the one
given in \cite{BE} (see an example below).

\medskip

The naive approach to represent both $Z_I$ and $Z_J$ as fractions of polynomials
in first Chern classes and then computing their product is less useful.
In particular translating the product of the fractions back into
a linear combination of Bott-Samelson classes will be very hard,
if possible at all.

\subsection{Examples}

We now compute the Bott-Samelson classes $Z_I$ in terms
of the Chern classes $x_i$ for the example $X=SL_3/B$ where $B$ is the subgroup of upper-triangular
matrices.
We then compute certain products of Bott-Samelson classes
in two ways, by hand and then using the algorithm above together with formula (5.1).
Note that only the second approach generalizes to higher dimensions.

In $SL_3$, there are two simple roots $\g_1$ and $\g_2$.
In $X$, there are six Schubert cycles $X_{e}=pt$, $X_1$, $X_2$, $X_{12}$, $X_{21}$ and $X_{121}=X$
(here $12$ is a short hand notation for $(\g_1,\g_2)$, etc.).
Each $X_I$ except for
$X_{121}$ coincides with its Bott-Samelson resolution $R_I$. Note that
in general $R_I$ and $X_I$ do not coincide even when $X_I$ is smooth.
(By the way, for $G=GL_n$ the first non-smooth Schubert cycles
show up for $n=4$.)
\paragraph{\bf Computing $Z_I$ as a polynomial in the first Chern classes.}
We want to express $Z_I$ as a polynomial in $x_1$, $x_2$, $x_3$
using the formulas
$$Z_{s_{i_1}\ldots s_{i_l}}=A_{i_l}\ldots A_{i_1}\R_e;
\quad \R_e=\frac16c_1(L(\g_1))c_1(L(\g_2))c_1(L(\g_1+\g_2)).$$
Note that in computations involving  the operators $A_\a$ it is more convenient
not to replace $c_1(L(\a))$ with its expression in terms of $x_i$ until the very end.

Let us for instance compute $R_1$ as a polynomial in $x_1$, $x_2$, $x_3$ modulo the ideal $S$
generated by the symmetric polynomials of positive degree:
 $$\R_1=A_1\R_e=\frac16(1+s_1)c_1(L(\g_2))c_1(L(\g_1+\g_2))=$$
$$\frac13c_1(L(\g_2))c_1(L(\g_1+\g_2))=\frac13
F(\chi(x_2),x_3)F(\chi(x_1),x_3).$$
We have $\chi(u)=-u+a_{11}u^2-a_{11}^2u^3$ and $F(u,v)=u+v+a_{11}uv+
a_{12}u^2v+a_{21}uv^2$, where $a_{11}=-[\P^1]$ and $a_{12}=a_{21}=[\P^1]^2-[\P^2]$ \cite[2.5]{LM}.
Thus
$$\frac13
F(\chi(x_2),x_3)F(\chi(x_1),x_3)=\frac13F(-x_2+a_{11}x_2^2-a_{11}x_2^3,x_3)
F(-x_1+a_{11}x_1^2-a_{11}x_1^3,x_3)=$$
$$=\frac13(-x_2+x_3+a_{11}x_2^2-a_{11}x_2x_3)(-x_1+x_3+a_{11}x_1^2-a_{11}x_1x_3)=x_3^2,$$
since $(x_3-x_2)(x_3-x_1)=3x_3^2\mod S$, and $(x_2+x_1)(x_2-x_3)(x_1-x_3)=-3x_3^3=0 \mod S$.
So the answer agrees with the one we got in Remark \ref{r.Borel}.

Here are the polynomials for the other Bott-Samelson resolutions:
$$\R_{212}=1+a_{12}x_1^2;\quad \R_{121}=1+a_{12}x_1x_2$$
$$\R_{12}=-x_1-[\P^1]x_1^2 \quad \R_{21}=x_3=-x_1-x_2$$
$$\R_1=x_3^2=x_1x_2 \quad \R_2=x_1^2$$
$$\R_e=-x_1^2x_2.$$
Note that the Bott-Samelson resolutions $R_I$ in this list
coincide with the Schubert cycles they resolve if $I$ has length $\le 2$.
The corresponding polynomials $\R_I$ are the classical Schubert polynomials
(see e.g. \cite{Ma} and keep in mind that his $x_i$ is equal to our $-x_i$)
except for the polynomial $\R_{12}$.

In general,  polynomials $\R_I$ can be computed by induction on the length of $I$. E.g.
to compute $\R_{212}$ we can use that $\R_{212}=A_2\R_{21}$ and $\R_{21}=x_3$. Hence,
$$\R_{212}=A_2(x_3)=1+a_{12}x_2x_3=1+a_{12}x_1^2$$
The middle equation is obtained using the formula $A(y_1)=1+a_{12}y_1y_2+\ldots$ from
Section \ref{s.Gysin} and the
observation that all symmetric polynomials in $x_2$ and $x_3$ of degree greater than 2
vanish modulo $S$.
\paragraph{\bf Computing products of the Bott-Samelson resolutions.}
Let us for instance compute $Z_{12}Z_{21}$. First, we do it by hand.

Denote by $\om_1$, $\om_2$ the fundamental weights of $SL_3$.
Applying Proposition \ref{p.bundle}(2) to $X_{121}=X$ we get
$$L(\lambda)=L(X_{21})^{(\lambda,\g_2)}\otimes L(X_{12})^{(\lambda,\g_1)}.$$
Hence, $c_1(L(\om_1))=X_{12}$ and $c_1(L(\om_2))=X_{21}$. (Note that if we instead
applied Proposition \ref{p.bundle}(1)  to $R_{121}$, we would
obtain the more complicated expression
$$r_{121}^*L(\lambda)=L(R_{21})^{(\lambda,\g_2)}\otimes L(R_{11})^{(\lambda,\g_1+\g_2)}\otimes L(R_{12})^{(\lambda,\g_1)},$$
which does not allow us to express $X_{12}=R_{12}$ as the Chern class of the line bundle $L(\l)$ on $R_{121}$.)

Hence, $$Z_{12}Z_{21}=c_1(L(\om_1))Z_{21}={r_{21}}_*c_1(r_{21}^*L(\omega_1))$$ by the projection formula:
$c_1(L(\lambda))\cdot Z_I={r_I}_*c_1({r_I}^*L(\lambda)).$
We now apply Proposition \ref{p.bundle}(1) to $R_{21}$ and $L(\omega_1)$ and get
$r_{21}^*L(\omega_1)=L(R_1)\otimes L(R_2).$
Using the formal group law  we compute
$c_1(L(R_1)\otimes L(R_2))=R_1+R_2-[\P^1]R_e.$
Finally, we use that ${r_J}_*[R_J]=Z_J$ and get that
$$Z_{12} Z_{21}=Z_1+Z_2-[\P^1]Z_e.$$

Similarly, we can easily compute the following products:
$$Z_{12}Z_{12}=Z_2; \quad Z_{21}Z_{21}=Z_1$$
$$Z_{12}Z_1=Z_{21} Z_2=Z_e, \quad Z_{12}Z_2=Z_{21}Z_1=0,$$
which in particular gives us another way to compute polynomials $\R_I$.

So the only product that differs from the analogous product in the Chow ring case is
the product $Z_{12}Z_{21}$.

We now compute the product $Z_{12}Z_{21}$  using formula (5.1).
We have $Z_{12}=c_1(L(\om_1))$ by Proposition \ref{p.bundle}(1).
Hence, according to formula (5.1)
$$Z_{12}Z_{21}=c_1(L(\om_1))Z_{21}=b_1(\om_1)Z_2+b_2(\om_1)Z_1+b_{21}(\om_1)Z_{e},$$
where $b_1$, $b_2$ and $b_{21}$ are the constant terms in $A_1^*(c_1(L(\om_1)))$,
$[A_2^*s_1](c_1(L(\om_1)))$ and $[A_2^*A_1^*](c_1(L(\om_1)))$, respectively. We already know
that $b_1(\l)=(\l,\g_1)$ and $b_2(\l)=(\l,s_1\g_2)$. It remains to compute $b_{21}(\l)$.
First, by using that $L(\l)=L(s_1\l)\otimes L(\g_1)^{(\l,\g_1)}$ and the formal group law  we write
$$A_1^*(c_1(L(\l)))=\frac{c_1(L(\l))-c_1(L(s_1\l))}{c_1(L(\g_1))}=$$
$$=(\l,\g_1)+a_{11}(\l,\g_1)[c_1(L(s_1\l))+\frac{(\l,\g_1)-1}{2}c_1(L(\g_1))]+\mbox{terms of } \deg\ge2$$
Hence,
$$[A_2^*A_1^*](c_1(L(\l)))=a_{11}(\l,\g_1)A_2^*[c_1(L(s_1\l))+\frac{(\l,\g_1)-1}{2}c_1(L(\g_1))]+\mbox{terms of }\deg\ge1=$$
$$=a_{11}(\l,\g_1)[(\l,s_1\g_2)-\frac{(\l,\g_1)-1}{2}]+\mbox{terms of }\deg\ge1, $$
and $b_{21}=a_{11}(\l,\g_1)[(\l,s_1\g_2)-\frac{(\l,\g_1)-1}{2}]$. We get
$$c_1(L(\l))Z_{21}=(\l,\g_1)Z_2+(\l,s_1\g_2)Z_1+a_{11}(\l,\g_1)[(\l,s_1\g_2)-\frac{(\l,\g_1)-1}{2}]Z_{e}.$$
In particular, $c_1(L(\om_1))Z_{21}=Z_2+Z_1+a_{11}Z_{e}$ (which coincides with
the answer we have found above by hand).

Finally, note that it takes more work to compute $c_1(L(\l))Z_{21}$ using the
algorithm in \cite{BE} because apart from certain formal group law  calculations (which are more
involved than the calculations we used to find $b_{21}$) one has also to compute the
products $R_1^2$ and $R_2^2$ in $CH^*(R_{21})$.

\section{Appendix: Complex realization for cellular varieties}

We will now prove the following result stated in Section 2:

\begin{thm}
For any smooth cellular variety $X$ over $k$
and any embedding $k \to \C$,
the complex geometric realization functor of $\L$-algebras
$r:\Omega^*(X) \to MU^*(X(\C)^{an})$ is an isomorphism.
\end{thm}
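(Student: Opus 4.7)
The plan is to deduce the theorem from the classical fact that the cycle class map is an isomorphism for cellular varieties, combined with a graded Nakayama argument.

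First, I would show that both $\Omega^*(X)$ and $MU^{2*}(X(\C)^{an})$ are free graded $\L$-modules of the same graded rank. On the algebraic side this is exactly Theorem \ref{t.cellular}. On the topological side, the CW-structure on $X(\C)^{an}$ has only even-dimensional real cells (one $2d_i$-cell per algebraic $d_i$-cell), so the Atiyah--Hirzebruch spectral sequence $E_2^{p,q}=H^p(X(\C)^{an},MU^q(pt))\Rightarrow MU^{p+q}(X(\C)^{an})$ is supported in even total degree and therefore degenerates at $E_2$, producing a free graded $\L$-module of the same graded rank as $\Omega^*(X)$.

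Next, I would identify $r\otimes_{\L}\Z$ with the classical cycle class map $CH^*(X)\to H^{2*}(X(\C)^{an},\Z)$. The Levine--Morel isomorphism $\Omega^*(X)\otimes_{\L}\Z\cong CH^*(X)$ and its topological counterpart $MU^{2*}(X(\C)^{an})\otimes_{\L}\Z\cong H^{2*}(X(\C)^{an},\Z)$ (coming from AH degeneration) are both compatible with push-forwards along projective morphisms, and $r$ itself is compatible with push-forwards (Section 2.2). So $r\otimes\Z$ sends the class $f_*[Y]\in CH^*(X)$ of a projective morphism $f\colon Y\to X$ with $Y$ smooth to the topological push-forward $f_*[Y(\C)^{an}]$, i.e.\ to the cycle class of the image. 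For smooth cellular $X$ this cycle class map is a classical isomorphism (see e.g.\ \cite[Example 19.1.11]{Fu}).

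Now since $r$ is a map of bounded-above graded $\L$-modules whose reduction modulo the augmentation ideal $\L_{<0}$ is an isomorphism, graded Nakayama applied to $\operatorname{coker}(r)$ (which is bounded above because it is a quotient of the bounded-above module $MU^{2*}(X(\C)^{an})$) forces $r$ to be surjective; and any surjective $\L$-linear map between free $\L$-modules of the same finite rank is automatically injective, since composing with a chosen inverse basis identification yields a surjective endomorphism of a finitely generated module over a commutative ring, hence an isomorphism. The main obstacle is the identification of $r\otimes\Z$ with the classical topological cycle map: this is essentially formal from the geometric definitions of both theories, but it requires carefully unwinding the Levine--Morel isomorphism and its topological counterpart so that the compatibilities with push-forwards propagate through the quotient by $\L_{<0}$. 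Once this compatibility is in place the remainder is pure graded commutative algebra combined with the well-known cellular comparison theorem in classical algebraic geometry.
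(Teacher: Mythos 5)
Your proposal is correct and follows essentially the same route as the paper's appendix: reduce to the map obtained after applying $\otimes_{\L}\Z$ (justified by freeness of both sides over $\L$ on the cells, which the paper gets from Theorem \ref{t.cellular} and the even-cell localization/Atiyah--Hirzebruch argument of Section 2.2, and which you make explicit via graded Nakayama plus a rank count), and then identify that reduction with the classical cycle class map $CH^*(X)\to H^{2*}(X(\C)^{an})$, which is an isomorphism for smooth cellular $X$. The only cosmetic difference is that the paper packages the compatibility of the reduction of $MU^*$ with the topological cycle class map by citing Totaro's factorization theorem, where you argue directly from push-forward compatibility.
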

\begin{proof}
Recall (see above) that the geometric realization functor
coincides with the map given by the universal property
of $\Omega^*$, and that both sides are freely generated
by (resolutions of the closures of) the cells. Thus
it suffices to show that it is an isomorphism if we pass
to the induced morphism after taking $\otimes_{\L} \Z$
on both sides, which we denote by $r'$.
Now by a theorem of Totaro \cite[Theorem 3.1]{To}
(compare also \cite[Remark 1.2.21]{LM}),
for cellular varieties the classical cycle class map
$c:CH^*(X) \to H^*(X(\C)^{an})$ (which is an isomorphism for cellular varieties
$X$) defined using fundamental classes and Poincar\'e duality
(see e. g. \cite[section A.3]{Ma}) factors as
$CH^*(X) \to MU^*(X(\C)^{an}) \otimes_{\L} \Z \cong H^*(X(\C)^{an})$,
and the left arrow in this factorization
is given by first taking any resolution of singularities
of the algebraic cycle and then applying $(\C)^{an}$.
We also have a morphism $q: \Omega^*(X) \to CH^*(X)$ which induces
an isomorphism $q':\Omega^*(X) \otimes_{\L}\Z \to CH^*(X)$
by Levine-Morel \cite[Theorem 1.2.19]{LM}
and corresponds to resolution of singularities
\cite[Section 4.5.1]{LM}. Putting everything together,
we obtain a commutative square

$\xymatrix{
\Omega^*(X) \otimes_{\L}\Z \ar[d]^{\cong}_{q'} \ar[r]^{r'} &
MU^*(X(\C)^{an}) \otimes_{\L}\Z \ar[d]^{f'}_{\cong} \\
CH^*(X) \ar[r]_{c}^{\cong} & H^*(X(\C)^{an})
}$

with the vertical maps and $c$ being isomorphisms,
which finishes the proof.
\end{proof}

\end{document}